\newtheorem{theorem}{Theorem}[section]
\newtheorem{lemma}[theorem]{Lemma}
\newtheorem{proposition}[theorem]{Proposition}
\newtheorem{corollary}[theorem]{Corollary}
\theoremstyle{definition}
\newtheorem{definition}[theorem]{Definition}
\newtheorem{problem}[theorem]{Open Problem}
\newcommand{\N}{\mathbb{N}}
\renewcommand{\mod}[2]{\equiv#1\textup{ (mod }#2\textup{)}}
\title{On the Domination Number of Permutation Graphs and an Application to Strong Fixed Points}
\author[1]{Theresa Baren}
\author[2]{Michael Cory}
\author[2]{Mia Friedberg}
\author[3]{Peter Gardner}
\author[4]{James Hammer}
\author[4]{Joshua Harrington}
\author[5]{Daniel McGinnis}
\author[6]{Riley Waechter}
\author[7]{Tony W. H. Wong}
\affil[1]{Auburn University}
\affil[2]{University of Florida}
\affil[3]{Western Carolina University}
\affil[4]{Cedar Crest College}
\affil[5]{New College of Florida}
\affil[6]{Northern Arizona University}
\affil[7]{Kutztown University of Pennsylvania}
\begin{document}
\maketitle

\begin{abstract}
A \textit{permutation graph} $G_\pi$ is a simple graph with vertices corresponding to the elements of $\pi$ and an edge between $i$ and $j$ when $i$ and $j$ are inverted in $\pi$. A set of vertices $D$ is said to $dominate$ a graph $G$ when every vertex in $G$ is either an element of $D$, or adjacent to an element of $D$. The \textit{domination number} $\gamma(G)$ is defined as the cardinality of a minimum dominating set of $G$. A \textit{strong fixed point} of a permutation $\pi$ of order $n$ is an element $k$ such that $\pi^{-1}(j)<\pi^{-1}(k)$ for all $j<k$, and $\pi^{-1}(i)>\pi^{-1}(k)$ for all $i>k$.
In this article, we count the number of connected permutation graphs on $n$ vertices with domination number $1$ and domination number $\frac{n}{2}$. We further show that for a natural number $k\leq \frac{n}{2}$, there exists a connected permutation graph on $n$ vertices with domination number $k$. We find a closed expression for the number of permutation graphs dominated by a set with two elements, and we find a closed expression for the number of permutation graphs efficiently dominated by any set of vertices. We conclude by providing an application of these results to strong fixed points, proving some conjectures posed on the OEIS.
\end{abstract}

\section{Introduction}

Let $G$ be a simple graph. Let $V(G)$ and $E(G)$ be the vertex set and the edge set of $G$, respectively. Two vertices $u,v\in V(G)$ are called \textit{adjacent} if  $\{u,v\}\in E(G)$. For each vertex $v\in V(G)$, the \textit{open neighborhood} $N(v)$ is defined to be the set of vertices in $G$ adjacent to $v$, while the \textit{closed neighborhood} is defined to be $N[v] = N(v)\cup\{v\}$. If $U \subseteq V(G)$, let $N[U] = \bigcup_{u\in U} N[u]$. A \textit{dominating set} of the graph $G$ is a set $D\subseteq V(G)$ such that $N[D] = V(G)$. It follows that a \textit{minimum dominating set} is a dominating set $M$ such that $|M|=\min\{|D|\ :\ D\text{ is a dominating set of }G\}$.
We define the \textit{domination number} $\gamma(G)$ as the cardinality of a minimum dominating set.
An \textit{efficient dominating set} is a dominating set $D$ such that for all distinct $d_1,d_2\in D$, N[$d_1]\cap N[d_2]=\emptyset$. If $D$ is a dominating set of some graph $G$ and for $v \in V(G)$, $N[v] \cap D= \{w\}$, then $v$ is a \textit{private neighbor} of $w$ with respect to $D$. If for $u\in V(G)$ $|N[u] \cap D| \geq 2$, then $u$ is a \textit{shared neighbor} with respect to $D$. When the dominating set $D$ is clear from context, we will not use the phrase ``with respect to $D$''.

Let $S_n$ be the set of all permutations on $[n]=\{1,2,\dotsc,n\}$ for $n\in\N$. We will always reference permutations with one-line notation, e.g. if $\pi$ is the permutation $\bigl(\begin{smallmatrix}
    1 & 2 & 3 & 4 & 5 \\
    3 & 1 & 2 & 5 & 4
  \end{smallmatrix}\bigr)$, then we will write $\pi = [3,1,2,5,4]$. If the image of an element under a permutation is not explicit, we write an underscore in its place. The \textit{permutation graph} of $\pi\in S_n$, denoted by $G_\pi$, is the graph with vertex set $V(G_\pi)=\{1,2,\dotsc,n\}$ and the edge set $E(G_\pi)=\{\{i,j\}:i,j\in V(G_\pi),i<j,\text{ and }\pi^{-1}(i)>\pi^{-1}(j)\}$.

Permutation graphs were introduced by Even, Pnueli, and Lempel in \cite{EvenPermutation} and \cite{EvenPermutation1972}. A survey of properties of permutation graphs and related algorithms can be found in \cite{GolumbicAlgorithmic}. A couple of these properties are that every induced subgraph of a permutation graph is a permutation graph and the complement of a permutation graph is a permutation graph. Domination in graphs has many variants and has been well-studied; a survey of results on domination can be found in \cite{HaynesFundamentals}. 
Many domination problems such as the minimum cardinality dominating set problem and the weighted independent dominating set problem (\cite{ChaoOptimal}, \cite{BrandstDomination}) are NP-complete for general graphs but can be solved in polynomial time for permutation graphs. Algorithms and complexity on permutation graphs have been studied for this reason.

In this paper, we investigate the structural properties of permutation graphs with a given domination number. We give a recursive formula for the number of permutation graphs on $n$ vertices with at least one minimum dominating set of size one and the number of permutation graphs on $n$ vertices with exactly $t$ dominating sets of size one. We fully characterize the connected permutation graphs with domination number $n/2$ for even $n$, and we show that there exists a connected permutation graph on $n$ vertices with domination number $k$ for $1\leq k \leq n/2$. We count the number of permutation graphs on $n$ vertices dominated by a set $\{u,v\}$ and the number of permutation graphs on $n$ vertices efficiently dominated by a set of vertices $A$. We give a formula for the number of disconnected permutation graphs on $n$ vertices with domination number $k$ in terms of connected permutation graphs of smaller order. We also provide an algorithm to find a dominating set of a given permutation graph. Finally, we apply our results for permutation graphs to count the number of permutations with a given number of strong fixed points, proving some conjectures given on the On-Line Encyclopedia of Integer Sequences (OEIS), \cite{OEIS}.

\section{Permutation Graphs with Domination Number $1$}
We begin with the following definitions.
\begin{definition}
Let $g(n,k)$ denote the number of permutation graphs on $n$ vertices with domination number $k$.
\end{definition}

\begin{definition}
Let $f(n,k,t)$ denote the number of permutation graphs on $n$ vertices with domination number $k$ and precisely $t$ minimum dominating sets.
\end{definition}
In this section we will be primarily concerned with the values of $g(n,k)$ and $f(n,k,t)$ when $k=1$, and the following lemma will be useful to study this case.

\begin{lemma}\label{lem1}
Let $G_{\pi}$ be a permutation graph on $n$ vertices; $\{k\}$ is a dominating set of $G_{\pi}$ if and only if $\pi^{-1}(k)=(n+1)-k$, $\pi^{-1}(j) > \pi^{-1}(k)$ for all $j<k$, and $\pi^{-1}(i) < \pi^{-1}(k)$ for all $i > k$.
\end{lemma}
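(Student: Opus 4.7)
The plan is to unpack the definition of the edge set $E(G_\pi)$ together with the definition of a dominating set; the statement is essentially a direct translation, plus a short counting step for the first condition. I would organize the proof as two implications.

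For the $(\Rightarrow)$ direction, assume $\{k\}$ dominates $G_\pi$, so every vertex $v\in[n]\setminus\{k\}$ lies in $N(k)$. I split on whether $v=j<k$ or $v=i>k$ and invoke the edge definition. An edge $\{j,k\}$ with $j<k$ exists iff $\pi^{-1}(j)>\pi^{-1}(k)$, yielding the second condition; an edge $\{k,i\}$ with $i>k$ exists iff $\pi^{-1}(k)>\pi^{-1}(i)$, yielding the third condition. The identity $\pi^{-1}(k)=(n+1)-k$ then falls out as a rank count: the $n$ distinct values $\pi^{-1}(1),\dots,\pi^{-1}(n)$ form $[n]$, and exactly $k-1$ of them (those indexed by $j<k$) exceed $\pi^{-1}(k)$ while exactly $n-k$ of them (those indexed by $i>k$) are below it. Hence $\pi^{-1}(k)$ is the $(n-k+1)$-st smallest element of $[n]$, namely $(n+1)-k$.

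For the $(\Leftarrow)$ direction, assume all three conditions. Reversing the same edge-definition logic, for every $j<k$ we have $\pi^{-1}(j)>\pi^{-1}(k)$, so $\{j,k\}\in E(G_\pi)$; and for every $i>k$ we have $\pi^{-1}(i)<\pi^{-1}(k)$, so $\{k,i\}\in E(G_\pi)$. Therefore $k$ is adjacent to every other vertex, and $\{k\}$ is a dominating set.

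There is no serious obstacle here; the argument is a direct translation of definitions together with the elementary observation that a value with $k-1$ elements above it and $n-k$ elements below it in $[n]$ must equal $(n+1)-k$. In fact the first condition is logically redundant given the other two, but stating it explicitly is useful because the exact position $\pi^{-1}(k)=(n+1)-k$ is what later counting arguments in the section will hinge on.
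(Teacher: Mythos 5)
Your proof is correct and follows essentially the same route as the paper's: both directions are direct translations of the edge and domination definitions, and the identity $\pi^{-1}(k)=(n+1)-k$ is obtained by the same rank-counting argument (the paper phrases it as the two inequalities $\pi^{-1}(k)\leq n-(k-1)$ and $\pi^{-1}(k)\geq (n-k)+1$). Your observation that the first condition is redundant given the other two is accurate and is implicitly reflected in the paper's converse, which uses only those two conditions.
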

\begin{proof}
If $\{k\}$ is a dominating set of $G_{\pi}$, we know that for all $j < k$, $\pi^{-1}(j) > \pi^{-1}(k)$, and for all $i>k$, $\pi^{-1}(i) < \pi^{-1}(k)$. Therefore $\pi^{-1}(k) \leq n-(k-1)$ and $\pi^{-1}(k) \geq (n-k)+1$, so $\pi^{-1}(k)=(n+1)-k$.

For the other direction, if $\pi^{-1}(i) < \pi^{-1}(k)$ for all $i > k$ and $\pi^{-1}(j) > \pi^{-1}(k)$ for all $j<k$, then clearly $\{k\}$ is a dominating set.
\end{proof}

\begin{proposition}\label{prop1}
There are precisely $(n-k)!(k-1)!$ permutation graphs on $n$ vertices that have $\{k\}$ as a dominating set.
\end{proposition}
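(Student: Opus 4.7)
The plan is to treat the result as a direct counting corollary of Lemma~\ref{lem1}. First I would invoke Lemma~\ref{lem1} to translate the graph-theoretic hypothesis into a concrete combinatorial condition on $\pi$: the set $\{k\}$ dominates $G_\pi$ if and only if $\pi^{-1}(k)=n+1-k$, every $j<k$ satisfies $\pi^{-1}(j)>n+1-k$, and every $i>k$ satisfies $\pi^{-1}(i)<n+1-k$. Reading this off the one-line notation of $\pi$, the entry in position $n+1-k$ must be $k$ itself, the first $n-k$ positions must be occupied by the elements of $\{k+1,k+2,\dotsc,n\}$ in some order, and the last $k-1$ positions must be occupied by the elements of $\{1,2,\dotsc,k-1\}$ in some order.

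Next I would count. Since the position of $k$ is forced and the two remaining blocks of positions can be filled independently, the number of permutations $\pi\in S_n$ for which $\{k\}$ dominates $G_\pi$ is the product of the number of arrangements in each block: $(n-k)!$ choices for the first block and $(k-1)!$ choices for the last block, yielding $(n-k)!(k-1)!$.

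Finally I would briefly remark that permutations of $[n]$ are in bijection with their permutation graphs (the labeled graph $G_\pi$ determines $\pi$, since for each vertex $v$ the position $\pi^{-1}(v)$ can be recovered as $v+|\{w>v:w\in N(v)\}|-|\{w<v:w\in N(v)\}|$), so counting permutations with the stated property is the same as counting permutation graphs with $\{k\}$ as a dominating set, completing the proof.

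There is no real obstacle here; the content of the proposition is entirely in Lemma~\ref{lem1}, and the present statement is just the enumeration step. The only conceptual point worth being careful about is the identification between labeled permutation graphs and permutations, which is needed so that the combinatorial count of permutations matches the count of graphs asserted in the proposition.
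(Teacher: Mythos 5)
Your proof is correct and follows essentially the same route as the paper's: apply Lemma~\ref{lem1} to pin down the position of $k$ and force the elements greater than $k$ into the first $n-k$ positions and those less than $k$ into the last $k-1$ positions, then multiply $(n-k)!$ by $(k-1)!$. Your closing remark justifying the identification of permutations with their labeled permutation graphs is a small extra precaution the paper leaves implicit, but it does not change the argument.
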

\begin{proof}
For any given permutation $\pi$ whose permutation graph $G_{\pi}$ has $\{k\}$ as a dominating set, by Lemma \ref{lem1}, $\{\pi^{-1}(j)\ :\ j <k\}=\{n-k+2, \dots,n\}$ and $\{\pi^{-1}(i)\ :\ i>k\}=\{1,\dots,n-k\}$. Therefore, there are $(k-1)!$ ways that $\pi^{-1}$ can act on the $k-1$ elements less than $k$, and there are $(n-k)!$ ways that $\pi^{-1}$ can act on the $n-k$ elements greater than $k$. Additionally by Lemma \ref{lem1}, there is only one way that $\pi^{-1}$ can act on $k$, thus the number of permutation graphs on $n$ vertices that have $\{k\}$ as a dominating set is $(n-k)!(k-1)!$.
\end{proof}

The following lemma will allow us to prove the recursive formula for $g(n,1)$ given in Theorem \ref{dom1count}.
\begin{lemma}\label{lem2}
There are $(n-k)!f(k-1,1,0)$ permutation graphs on $n$ vertices that have $\{k\}$, but not $\{r\}$, as a dominating set for $1 \leq r \leq k-1$.
\end{lemma}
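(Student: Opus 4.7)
My plan is to show that the $(n-k)!(k-1)!$ permutations from Proposition~\ref{prop1} factor into a ``head'' with $(n-k)!$ choices and a ``tail'' with $(k-1)!$ choices, and that the constraint ``$\{r\}$ is not a dominating set for any $r<k$'' depends only on the tail. First, I would unpack Lemma~\ref{lem1} to see that any $\pi$ with $\{k\}$ dominating has the block form
$$\pi=[\sigma_1,\ldots,\sigma_{n-k},\ k,\ \tau_1,\ldots,\tau_{k-1}],$$
where $(\sigma_1,\ldots,\sigma_{n-k})$ is an arrangement of $\{k+1,\ldots,n\}$ and $(\tau_1,\ldots,\tau_{k-1})$ is an arrangement of $[k-1]$. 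I would let $\tau\in S_{k-1}$ denote the permutation defined by $\tau(i)=\tau_i$.

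The main step is to prove that for each $r\in[k-1]$, $\{r\}$ dominates $G_\pi$ if and only if $\{r\}$ dominates $G_\tau$. Since the tail occupies positions $n+2-k,\ldots,n$, one has $\pi^{-1}(r)=\tau^{-1}(r)+(n+1-k)$ for $r\in[k-1]$. Applying Lemma~\ref{lem1} to $G_\pi$, the position condition $\pi^{-1}(r)=n+1-r$ becomes $\tau^{-1}(r)=k-r$, which is exactly the Lemma~\ref{lem1} position condition for $\{r\}$ in $G_\tau$. The ordering conditions $\pi^{-1}(j)>\pi^{-1}(r)$ for $j<r$ and $\pi^{-1}(i)<\pi^{-1}(r)$ for $r<i\leq k-1$ translate identically under the shift. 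The remaining cases $i=k$ and $i>k$ in the $\pi$-condition are automatic, since $\pi^{-1}(k)=n+1-k$ and $\pi^{-1}(i)\leq n-k$ for $i>k$, while $\pi^{-1}(r)\geq n+2-k$.

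With this equivalence in hand, the count is immediate: the valid $\pi$ are in bijection with pairs $(\sigma,\tau)$ where $\sigma$ is any of the $(n-k)!$ arrangements of $\{k+1,\ldots,n\}$ and $\tau\in S_{k-1}$ is a permutation whose permutation graph admits no singleton dominating set. The latter count is $f(k-1,1,0)$, giving the desired total $(n-k)!\cdot f(k-1,1,0)$. The main subtlety is the bookkeeping around indices $i\geq k$ when verifying the equivalence; everything else is a direct reduction to Lemma~\ref{lem1} applied to a permutation of smaller order.
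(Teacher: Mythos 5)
Your proposal is correct and follows essentially the same route as the paper: both use Lemma \ref{lem1} to force the block form $[\sigma, k, \tau]$ and then identify the admissible tails $\tau$ with the $f(k-1,1,0)$ permutations of $[k-1]$ whose permutation graphs have no singleton dominating set, while the head contributes a free factor of $(n-k)!$. If anything, your verification that $\{r\}$ dominates $G_\pi$ if and only if $\{r\}$ dominates $G_\tau$ (including the automatic cases $i \geq k$) is spelled out more symmetrically than the paper's bijection argument, but the underlying decomposition is identical.
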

\begin{proof}
We first show that the number of ways that the $k-1$ elements less than $k$ can be arranged in a permutation that is dominated by $\{k\}$, but not $\{r\}$, for $1 \leq r \leq k-1$ is $f(k-1,1,0)$. Consider the set $A=\{\pi \in S_n~:\ G_{\pi} \text{ is dominated by } \{k\}, \text{ but not } \{r\}, \text{ for } 1 \leq r \leq k-1 \}$. By Lemma \ref{lem1}, $\{\pi^{-1}(j)\ :\ j <k\}=\{n-k+2, \dots,n\}$, so denoting $R=\{n-k+2, \dots,n\}$, the cardinality of the set $B=\{\pi_{|R}\ :\ \pi \in A\}$ is the number of ways that the $k-1$ elements less than $k$ can be arranged. We will show that $B$ is in bijection with the set $C=\{\pi \in S_{k-1}\ :\ \gamma(G_{\pi}) \neq 1\}$.

Define the map $h:\ B \rightarrow C$ by $h(\sigma)=\tau$ where $\tau(1)=\sigma(n-k+2)$, $\tau(2)=\sigma(n-k+3),\dots ,\tau(k-1)=\sigma(n)$. The permutation $\tau$ is an element of $C$, since by definition of $B$, $\sigma=\pi_{|R}$ for some $\pi \in A$, and by definition of $A$, for all $i\geq k>r$, $\pi^{-1}(i) \leq \pi^{-1}(k) < \pi^{-1}(r)$. Also by definition of $A$, either $\pi^{-1}(j) > \pi^{-1}(r)$ for some $r < j \leq k-1$ or $\pi^{-1}(j) < \pi^{-1}(r)$ for some $j < r$ for all $1 \leq r \leq k-1$. By Lemma \ref{lem1}, this proves that $\tau$ does not have a dominating set of size one. Moreover, $h$ is clearly well-defined. To show $h$ is injective, if $\sigma \neq \pi$, then $\sigma(n-k+1+i) \neq \pi(n-k+1+j)$ for some $i \neq j$, then $h(\sigma)(i) \neq h(\pi)(j)$. To show $h$ is surjective, let $\tau \in C$, define a permutation $\pi \in S_n$ by $\pi(i)=(n+1)-i$ for $1 \leq i \leq k$ and $\pi(n-k+1+j)=\tau(j)$ for $1 \leq j \leq k-1$. Clearly $\pi \in A$ so $\pi_{|R} \in B$, and by construction, $h(\pi_{|R})=\tau$. This shows that $h$ is a bijection. Therefore, the number of ways that the $k-1$ elements less than $k$ can be arranged is $|B|=|C|=(k-1)!-g(k-1,1)=f(k-1,1,0)$. Akin to the proof from in Proposition \ref{prop1}, there are $(n-k)!$ ways that the $n-k$ elements greater than $k$ can be arranged. Consequently, the number of permutation graphs on $n$ vertices that have $\{k\}$, but not $\{r\}$, as a dominating set for $1 \leq r \leq k-1$  is $(n-k)!f(k-1,1,0)$.
\end{proof}

\begin{theorem}\label{dom1count}
A recursive formula for $g(n,1)$ is given by the following.
\begin{align}
g(0,1)&=0,\\
g(n,1)&= \sum_{k=1}^n(n-k)!f(k-1,1,0),\  \text{for } n \geq 1.\label{eq2}
\end{align}\\[-36pt]

\end{theorem}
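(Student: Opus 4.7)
The plan is to derive the recursion by partitioning the set of permutation graphs on $n$ vertices with $\gamma = 1$ according to the smallest vertex whose singleton dominates, then applying Lemma \ref{lem2} to each block.

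First, the base case $g(0,1) = 0$ is immediate: the unique permutation graph on $0$ vertices is empty and has domination number $0$, not $1$. Next, for $n \geq 1$, given any permutation $\pi \in S_n$ whose graph $G_\pi$ has domination number $1$, let $S_\pi = \{k \in [n] : \{k\} \text{ is a dominating set of } G_\pi\}$. By hypothesis $S_\pi \neq \varnothing$, so $k^*(\pi) := \min S_\pi$ is a well-defined element of $[n]$. The assignment $\pi \mapsto k^*(\pi)$ partitions the $g(n,1)$ graphs counted by the left-hand side into $n$ (possibly empty) blocks $B_1, B_2, \ldots, B_n$, where $B_k$ consists exactly of those permutation graphs on $n$ vertices for which $\{k\}$ is a dominating set but $\{r\}$ is not for any $1 \leq r \leq k-1$.

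By Lemma \ref{lem2}, $|B_k| = (n-k)!\, f(k-1,1,0)$, and summing over $k = 1, \ldots, n$ yields the desired identity in \ref{dom1count}. The real combinatorial content is already housed in Lemma \ref{lem2} (via the bijection $h$ between arrangements of the top $k-1$ elements and permutations of order $k-1$ with no dominating singleton), so no serious obstacle remains for this theorem. The only checks needed are that the partition is well-defined and exhaustive, and both follow trivially from the fact that $S_\pi$ is a nonempty subset of $[n]$ for every $\pi$ in the class being counted. The argument thus reduces to routine aggregation of Lemma \ref{lem2}.
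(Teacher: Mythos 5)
Your proposal is correct and follows essentially the same route as the paper: both partition the permutation graphs with domination number $1$ into the classes $D_k$ (graphs dominated by $\{k\}$ but by no $\{r\}$ with $r<k$) and invoke Lemma \ref{lem2} to count each class. No substantive differences to report.
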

\begin{proof}
Let $D$ denote the set of all permutation graphs on $n$ vertices that have domination number 1. Let $D_1$ be the set of all permutation graphs on $n$ vertices that have $\{1\}$ as a dominating set, and denote $D_k$ as the set of all permutation graphs on $n$ vertices that have $\{k\}$, but not $\{r\}$, as a dominating set for $1 \leq r \leq k-1$ and $2 \leq k \leq n$. Clearly each $D_i$ is disjoint and $\bigcup_{i=1}^n D_i=D$, so $|D|=\sum_{i=1}^n|D_i|=\sum_{k=1}^n(n-k)!f(k-1,1,0)$, where the last equality follows from Lemma \ref{lem2}.
\end{proof}
Although Equation \ref{eq2} does not initially seem recursive, we have the equality $f(k-1,1,0)=(k-1)!-g(k-1,1)$, so $g(n,1)$ is calculated recursively. We now give a recursive formula for $f(n,1,t)$ using similar methods.

\begin{lemma}\label{lem3}
There are $f(n-k,1,t-1)f(k-1,1,0)$ permutation graphs on $n$ vertices that have $\{k\}$, but not $\{r\}$, as a dominating set for $1 \leq r \leq k-1$, and exactly $t$ dominating sets of size one.
\end{lemma}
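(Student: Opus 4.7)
The plan is to reuse the structural decomposition induced by Lemma \ref{lem1} together with the bijection developed in the proof of Lemma \ref{lem2}. Since $\{k\}$ dominates $G_\pi$, the map $\pi^{-1}$ restricts to a bijection from $\{1,\dots,k-1\}$ onto the high positions $\{n-k+2,\dots,n\}$ and from $\{k+1,\dots,n\}$ onto the low positions $\{1,\dots,n-k\}$, and crucially these two restrictions can be chosen independently. The proof will therefore reduce to counting the two pieces separately and multiplying.

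First I would handle the bottom piece, consisting of the elements less than $k$ that occupy the high positions. The condition that $\{r\}$ is not a dominating set for any $1\leq r\leq k-1$ depends only on the relative order of these elements, so the bijection in Lemma \ref{lem2} shows directly that the number of valid arrangements is $f(k-1,1,0)$. For the top piece I would introduce the relabeling $\phi\colon\pi\mapsto\tau\in S_{n-k}$ by $\tau(i)=\pi(i)-k$ for $1\leq i\leq n-k$, which is plainly a bijection from top-piece arrangements onto $S_{n-k}$. The key claim I would then establish is that for every $s>k$, $\{s\}$ is a singleton dominating set of $G_\pi$ if and only if $\{s-k\}$ is a singleton dominating set of $G_\tau$. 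Granting the claim, requiring exactly $t-1$ of the elements $\{k+1,\dots,n\}$ to yield singleton dominating sets of $G_\pi$ (the $t$-th being $\{k\}$ itself) corresponds to exactly $t-1$ singleton dominating sets of $G_\tau$, giving $f(n-k,1,t-1)$ valid top pieces. Multiplying the two independent counts delivers the claimed formula $f(n-k,1,t-1)f(k-1,1,0)$.

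The hard part will be verifying the singleton-domination equivalence under $\phi$, specifically confirming that the elements less than $k$ parked in the high positions cannot disrupt the Lemma \ref{lem1} criterion for some $s>k$. The observation I would lean on is that any $s>k$ for which $\pi^{-1}(s)=n+1-s$ has $\pi^{-1}(s)\leq n-k$, whereas every $j<k$ satisfies $\pi^{-1}(j)\geq n-k+2$, and $\pi^{-1}(k)=n+1-k>\pi^{-1}(s)$. Consequently the inequality $\pi^{-1}(j)>\pi^{-1}(s)$ demanded by Lemma \ref{lem1} is automatic whenever $j\leq k$, so whether $\{s\}$ dominates $G_\pi$ is determined solely by the top-piece arrangement; translating via $\phi$, this becomes exactly the condition that $\{s-k\}$ dominates $G_\tau$, and the analogous matching of the $\{i>s\}$ inequalities closes the argument.
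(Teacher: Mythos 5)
Your proposal is correct and follows essentially the same route as the paper: split the permutation into the block of elements less than $k$ (handled by the Lemma \ref{lem2} bijection, giving $f(k-1,1,0)$) and the block of elements greater than $k$ (relabeled into $S_{n-k}$, giving $f(n-k,1,t-1)$), then multiply. Your explicit verification that singleton domination by $s>k$ is unaffected by the low elements parked in the high positions is exactly the content the paper compresses into ``analogous to the proof of Lemma \ref{lem2}.''
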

\begin{proof}
Let $A$ denote the set of permutations whose permutation graph has $\{k\}$, but not $\{r\}$, as a dominating set for $1 \leq r \leq k-1$, and exactly $t$ dominating sets of size one. As in Lemma \ref{lem2}, the number of ways that the $k-1$ elements less than $k$ can be arranged is $f(k-1,1,0)$. We show that the number of ways that the $n-k$ elements greater than $k$ can be arranged is $f(n-k,1,t-1)$.

Let $R=\{1,2,\dots,n-k\}$. The cardinality of the set $B=\{\pi_{|R}\ :\ \pi \in A \}$ is the number of ways that the $n-k$ elements greater than $k$ can be arranged. Let $C$ be the set of permutations on $n-k$ that has exactly $t-1$ dominating sets of size one. Analogous to the proof of Lemma \ref{lem2}, the function $h:\ B \rightarrow C$ defined by $h(\sigma)=\tau$ where $\tau(k)=\sigma(k)-k$ is a bijection, which demonstrates that the number of ways that the $n-k$ elements greater than $k$ can be arranged is $f(n-k,1,t-1)$.

Accordingly, there are $f(n-k,1,t-1)f(k-1,1,0)$ permutation graphs on $n$ vertices that have $\{k\}$, but not $\{r\}$, as a dominating set for $1 \leq r \leq k-1$, and exactly $t$ dominating sets of size one.
\end{proof}
\begin{theorem}\label{recthm}
A recursive formula for $f(n,1,t)$ is given by the following.
\begin{align}
f(n,1,0)&=n!-g(n,1),\\
f(n,1,t)&=\sum_{k=1}^{n-t+1}f(n-k,1,t-1)f(k-1,1,0) \text{ for } 1 \leq t \leq n.
\end{align}
\end{theorem}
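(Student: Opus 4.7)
The plan is to prove the two parts of Theorem \ref{recthm} separately, with the recursive identity following by a partition argument that parallels the one used for Theorem \ref{dom1count}.

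For the base case $f(n,1,0) = n! - g(n,1)$, I would argue directly. The total number of permutations of $[n]$ is $n!$, and each such permutation has an associated permutation graph. A permutation graph $G_\pi$ has at least one singleton dominating set precisely when $\gamma(G_\pi) = 1$, i.e., when $G_\pi$ is counted by $g(n,1)$. Hence the number of permutation graphs with no singleton dominating set is $n! - g(n,1)$, which is exactly $f(n,1,0)$. Since $f(n,1,0)$ is defined as the count with $t=0$ dominating sets of size one, this is immediate.

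For the recursive identity, I would partition the permutation graphs counted by $f(n,1,t)$ according to the smallest label $k$ for which $\{k\}$ is a singleton dominating set. Since every such graph has $t \geq 1$ singletons, such a minimal $k$ exists. For a fixed $k$, the graphs with $\{k\}$ dominating but no $\{r\}$ with $r < k$ dominating, and with exactly $t$ total singleton dominating sets, are counted by Lemma \ref{lem3} as $f(n-k,1,t-1)f(k-1,1,0)$. Summing over the allowable values of $k$ yields
\begin{equation*}
f(n,1,t) = \sum_{k=1}^{n-t+1} f(n-k,1,t-1)f(k-1,1,0).
\end{equation*}

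The one point that requires care is the upper limit of summation. After fixing $k$ as the minimum label of a singleton dominator, the remaining $n-k$ elements larger than $k$ must, under the bijection from the proof of Lemma \ref{lem3}, be rearranged so that the resulting smaller permutation graph has exactly $t-1$ singleton dominating sets. This requires $n - k \geq t - 1$, i.e., $k \leq n - t + 1$; for $k > n - t + 1$ the factor $f(n-k,1,t-1)$ vanishes trivially, so truncating the sum at $n - t + 1$ is consistent. The main (very minor) obstacle is verifying that these classes, indexed by the minimum $k$, are pairwise disjoint and exhaust $f(n,1,t)$, which follows directly from the definition of ``minimum'' and the fact that every permutation graph counted by $f(n,1,t)$ with $t \geq 1$ has at least one singleton dominator.
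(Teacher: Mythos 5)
Your proposal is correct and follows essentially the same route as the paper: the base case is immediate from the definitions, and the recursion comes from partitioning the graphs counted by $f(n,1,t)$ according to the minimal $k$ for which $\{k\}$ is a dominating set, applying Lemma \ref{lem3} to each class, and observing that $k>n-t+1$ contributes nothing. No gaps.
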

\begin{proof}
Denote $D$ as the set of permutation graphs that have exactly $t$ dominating sets of size one.
Let $D_1$ be the set of permutation graphs on $n$ vertices that have $\{1\}$ as a dominating set and exactly $t$ dominating sets of size one. Let $D_k$ be the set of permutation graphs on $n$ vertices that have $\{k\}$, but not $\{r\}$, as a dominating set for $1 \leq r \leq k-1$ and have exactly $t$ dominating sets of size one for $2 \leq k \leq n-t+1$ (notice there are no permutation graphs on $n$ vertices that have $\{k\}$, but not $\{r\}$, as a dominating set for $1 \leq r \leq k-1$ and have exactly $t$ dominating sets for $k \geq n-t+2$). Clearly each $D_i$ is disjoint and $\bigcup_{i=1}^{n-t+1}D_i=D$, therefore $|D|=\sum_{i=1}^{n-t+1}|D_i|=\sum_{i=1}^{n-t+1}f(n-k,1,t-1)f(k-1,1,0)$, where the last equality follows from Lemma \ref{lem3}.
\end{proof}

\section{Permutation Graphs with a Dominating Set of Size Two and Permutation Graphs Efficiently Dominated by a Set of Vertices $A$}
In Proposition \ref{prop1}, we found the number of permutation graphs on $n$ vertices that have $\{k\}$ as a dominating set. We extend this result by finding the number of permutation graphs on $n$ vertices that have $\{u,v\}$ as a dominating set. We then find the number of permutation graphs on $n$ vertices that are efficiently dominated by a set of vertices $A$. The primary technique used throughout this section is analyzing the private and shared neighbors of each element in the dominating set and determining where they must lie in the one-line notation of the permutation.
\begin{lemma}\label{lem4}
The permutation graph $G_{\pi}$ has $\{u,v\}$ as a dominating set where $u < v$, and $u$ and $v$ are not adjacent if and only if every element is either $u$, $v$, a private neighbor of $u$, a private neighbor of $v$, or a shared neighbor of $u$ and $v$, and
\begin{enumerate}
	\item The private neighbors of $u$ are less than $v$,
    \item The private neighbors of $v$ are greater than $u$,
    \item The shared neighbors of $u$ and $v$ are either less than $u$ or greater than $v$.
\end{enumerate}
\end{lemma}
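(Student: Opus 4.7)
The plan is to prove both directions by case analysis on the value of a vertex $w\neq u,v$ relative to $u$ and $v$, using throughout the equivalence that for $i<j$, $\{i,j\}\in E(G_\pi)$ iff $\pi^{-1}(i)>\pi^{-1}(j)$; the key setup observation is that $u<v$ with $u,v$ non-adjacent is equivalent to $\pi^{-1}(u)<\pi^{-1}(v)$.

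For the forward direction, suppose $\{u,v\}$ dominates and $u,v$ are non-adjacent, so $\pi^{-1}(u)<\pi^{-1}(v)$. For each $w\neq u,v$ I would split on whether $w<u$, $u<w<v$, or $w>v$. When $w<u$, adjacency to $u$ is $\pi^{-1}(w)>\pi^{-1}(u)$ and adjacency to $v$ is $\pi^{-1}(w)>\pi^{-1}(v)$; since $\pi^{-1}(u)<\pi^{-1}(v)$ the second implies the first, so $w$ is either private to $u$ (with $\pi^{-1}(u)<\pi^{-1}(w)<\pi^{-1}(v)$) or shared. When $u<w<v$, the adjacency conditions $\pi^{-1}(w)<\pi^{-1}(u)$ and $\pi^{-1}(w)>\pi^{-1}(v)$ are mutually exclusive, so $w$ must be private to exactly one of $u,v$. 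The case $w>v$ is symmetric to $w<u$. Collating, private neighbors of $u$ only arise from $w<u$ or $u<w<v$, giving $w<v$; private neighbors of $v$ only arise from $u<w<v$ or $w>v$, giving $w>u$; and shared neighbors only arise from $w<u$ or $w>v$. The five-class classification is immediate from domination.

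For the backward direction, the five-class hypothesis immediately gives that every $w\notin\{u,v\}$ is adjacent to $u$ or $v$, so $\{u,v\}$ dominates. Non-adjacency of $u$ and $v$ follows by contradiction: if $\{u,v\}\in E(G_\pi)$ then $v\in N[u]\cap\{u,v\}$ yields $|N[u]\cap\{u,v\}|=2$, making $u$ itself a shared neighbor, which violates condition 3 since $u$ is neither less than $u$ nor greater than $v$. The main obstacle is the bookkeeping in the forward direction, in particular carefully tracking in the outer cases $w<u$ and $w>v$ which range of $\pi^{-1}(w)$ yields a private versus a shared neighbor; once that is sorted out the three positional claims fall out directly, and the converse is short.
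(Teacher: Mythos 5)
Your proof is correct and follows essentially the same route as the paper's: both hinge on the observation that non-adjacency of $u<v$ forces $\pi^{-1}(u)<\pi^{-1}(v)$ and then compare $\pi^{-1}$-values, with your direct case split on the position of $w$ relative to $u$ and $v$ being just a reorganization of the paper's contrapositive argument by class. Your explicit derivation of non-adjacency in the converse (via $u$ becoming a shared neighbor violating condition 3) is a welcome elaboration of a step the paper dismisses as ``follows directly.''
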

\begin{proof}
Assume that $G_{\pi}$ has $\{u,v\}$ as a dominating set where $u$ and $v$ are not adjacent. It then follows by definition that every element is either $u$, $v$, a private neighbor of $u$, a private neighbor of $v$, or a shared neighbor of $u$ and $v$. We prove statements $1$, $2$, and $3$ by contraposition. Note that since $u$ and $v$ are not adjacent, $\pi^{-1}(u) < \pi^{-1}(v)$.

If some private neighbor $i$ of $u$ is greater than $v$, then $i>u$, so since $i$ is not adjacent to $v$, $\pi^{-1}(v) < \pi^{-1}(i)<\pi^{-1}(u)$, which implies that $u$ and $v$ are adjacent and proves the contrapositive of statement $1$.
The proof for statement $2$ is similar.
If $u<j<v$ is a shared neighbor of $u$ and $v$, then $\pi^{-1}(j)<\pi^{-1}(u)$ and $\pi^{-1}(v)<\pi^{-1}(j)$, so $\pi^{-1}(v)<\pi^{-1}(u)$, which shows that $u$ and $v$ are adjacent and confirms the contrapositive of statement $3$.

The other direction of the statement follows directly.
\end{proof}


\begin{theorem}\label{thm1}
The number of permutation graphs on $n$ vertices that are dominated by $\{u,v\}$, where $u$ and $v$ are not adjacent and $u < v$, is given by the following expression:
\[
\sum_{\substack{x_1+x_2=u-1\\y_1+y_2=v-u-1\\z_1+z_2=n-v}}(y_1+z_2)!(x_1+z_1)!(x_2+y_2)!{u-1 \choose x_1}{v-u-1 \choose y_1}{n-v \choose z_1}
\]
for nonnegative integers $x_1$, $x_2$, $y_1$, $y_2$, $z_1$, and $z_2$.
\end{theorem}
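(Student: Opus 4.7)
The plan is to refine Lemma~\ref{lem4} by pinning down, for each element of $[n]\setminus\{u,v\}$, both its role (private neighbor of $u$, private neighbor of $v$, or shared neighbor) and its position in the one-line notation of $\pi$ relative to the two slots occupied by $u$ and $v$. Once every element is forced into one of three ``zones'' -- positions before $u$, strictly between $u$ and $v$, or after $v$ -- the count reduces to a product of binomial coefficients (choosing which elements in each value range play which role) times factorials (arranging each zone freely).

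Since $u$ and $v$ are non-adjacent with $u<v$, we have $\pi^{-1}(u)<\pi^{-1}(v)$. I would run a six-way case analysis on the pair (value range of $i$, role of $i$), using the definition of the edge set of a permutation graph. For $i<u$: a private neighbor of $u$ satisfies $\pi^{-1}(u)<\pi^{-1}(i)<\pi^{-1}(v)$ (adjacent to $u$, not to $v$), so it lies between $u$ and $v$; a shared neighbor satisfies $\pi^{-1}(i)>\pi^{-1}(v)$, so it lies after $v$. For $u<i<v$: a private neighbor of $u$ has $\pi^{-1}(i)<\pi^{-1}(u)$, placing it before $u$; a private neighbor of $v$ has $\pi^{-1}(i)>\pi^{-1}(v)$, placing it after $v$. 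For $i>v$: a private neighbor of $v$ has $\pi^{-1}(u)<\pi^{-1}(i)<\pi^{-1}(v)$, placing it between $u$ and $v$; a shared neighbor has $\pi^{-1}(i)<\pi^{-1}(u)$, placing it before $u$. The three forbidden roles (shared with $u<i<v$, private-of-$u$ with $i>v$, private-of-$v$ with $i<u$) are precisely those excluded by Lemma~\ref{lem4}.

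Let $x_1,x_2$ be the numbers of private-of-$u$ and shared elements of value less than $u$; let $y_1,y_2$ be the numbers of private-of-$u$ and private-of-$v$ elements of value in $(u,v)$; let $z_1,z_2$ be the numbers of private-of-$v$ and shared elements of value greater than $v$. The three value ranges have sizes $u-1$, $v-u-1$, and $n-v$, giving the three constraints in the theorem. From the case analysis, the zone before $u$ contains exactly $y_1+z_2$ elements, the middle zone contains exactly $x_1+z_1$ elements, and the zone after $v$ contains exactly $x_2+y_2$ elements; summing these plus $2$ for $u$ and $v$ recovers $n$, as a sanity check.

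For each feasible tuple $(x_1,x_2,y_1,y_2,z_1,z_2)$, the count of admissible permutations is then $\binom{u-1}{x_1}\binom{v-u-1}{y_1}\binom{n-v}{z_1}$ (selecting which elements of each value range take which role) times $(y_1+z_2)!\,(x_1+z_1)!\,(x_2+y_2)!$ (arranging each zone independently). The key observation that justifies the independent arrangements is that Lemma~\ref{lem4} characterizes domination by $\{u,v\}$ purely in terms of how each element relates to $u$ and $v$, with no further constraints among the other vertices, so every ordering within a zone yields a valid permutation graph dominated by $\{u,v\}$. Summing over all feasible tuples gives the stated closed form. The main obstacle is simply the bookkeeping of the six sub-cases and verifying that the three zone sizes are exactly $y_1+z_2$, $x_1+z_1$, and $x_2+y_2$; everything else is mechanical.
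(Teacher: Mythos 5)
Your proposal is correct and follows essentially the same route as the paper: both invoke Lemma~\ref{lem4}, classify each element by its value range (below $u$, between $u$ and $v$, above $v$) and its role (private to $u$, private to $v$, or shared), deduce that each (range, role) pair forces a unique positional zone in the one-line notation, and then count by choosing roles within each range and permuting each zone freely. Your zone sizes $y_1+z_2$, $x_1+z_1$, $x_2+y_2$ and the justification that any within-zone arrangement remains admissible match the paper's argument exactly.
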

\begin{proof}
Let $\pi$ be a permutation on $[n]$ where $u$ and $v$ are not adjacent. Let $A^{\pi}$ be the set of private neighbors of $u$ not including $u$, let $B^{\pi}$ be the set of private neighbors of $v$ not including $v$, and let $C^{\pi}$ be the shared neighbors of $u$ and $v$. By Lemma \ref{lem4}, we can conclude that $\{u,v\}$ dominates $G_{\pi}$ if and only if $A^{\pi}$, $B^{\pi}$, and $C^{\pi}$ are the disjoint unions 
\begin{align*}
A^{\pi}&=A_1^{\pi} \cup A_2^{\pi},\\
B^{\pi}&=B_1^{\pi} \cup B_2^{\pi},\\
C^{\pi}&=C_1^{\pi} \cup C_2^{\pi},
\end{align*}
where 
\begin{align*}
A_1^{\pi}&=\{x \in A^{\pi}\ :\ x<u\},& A_2^{\pi}&=\{x \in A^{\pi}\ :\ u<x<v\},\\
B_1^{\pi}&=\{x \in B^{\pi}\ :\ u<x<v\},&
B_2^{\pi}&=\{x \in B^{\pi}\ :\ x>v\},\\
C_1^{\pi}&=\{x \in C^{\pi}\ :\ x<u\},&
C_2^{\pi}&=\{x \in C^{\pi}\ :\ x>v\},
\end{align*}
and 
\begin{align*}
|A_1^{\pi}|+|C_1^{\pi}|&=u-1,\\
|A_2^{\pi}|+|B_1^{\pi}|&=v-u-1,\\
|B_2^{\pi}|+|C_2^{\pi}|&=n-v.\\
\end{align*}
Now denote $D{(x_1,x_2,y_1,y_2,z_1,z_2)}$ where $x_1$, $x_2$, $y_1$, $y_2$, $z_1$, and $z_2$ are natural numbers and $x_1+x_2=u-1$, $y_1+y_2=v-u-1$, and $z_1+z_2=n-v$ as the set of permutations $\pi$ such that $|A_1^{\pi}|=x_1$, $|C_1^{\pi}|=x_2$, $|A_2^{\pi}|=y_1$, $|B_1^{\pi}|=y_2$, $|B_2^{\pi}|=z_1$, and $|C_2^{\pi}|=z_2$. A permutation $\tau$ is an element of $D{(x_1,x_2,y_1,y_2,z_1,z_2)}$ if and only if
\[
\tau=[ \underline{\ \ } , \dots , \underline{\ \ } , u, \underline{\ \ } , \dots , \underline{\ \ } , v, \underline{\ \ } , \dots ,\underline{\ \ } ],
\]
where every element in $A^{\tau}_2$ and $C^{\tau}_2$ lies to the left of $u$ in any arrangement, every element in $A^{\tau}_1$ and $B^{\tau}_2$ lies in between $u$ and $v$ in any arrangement, and every element in $B^{\tau}_1$ and $C^{\tau}_1$ lies to the right of $v$ in any arrangement. There are also ${u-1 \choose x_1}$ ways to choose $x_1$ elements to be in $A^{\tau}_1$ (the elements in $C^{\tau}_1$ are then determined), ${v-u-1 \choose y_1}$ ways to choose elements in $A^{\tau}_2$, and ${n-v \choose z_1}$ ways to choose elements in $B^{\tau}_2$. Thus, $|D{(x_1,x_2,y_1,y_2,z_1,z_2)}|=(y_1+z_2)!(x_1+z_1)!(x_2+y_2)!{u-1 \choose x_1}{v-u-1 \choose y_1}{n-v \choose z_1}$. Now let $D$ be the set of permutations whose graphs are dominated by $u$ and $v$, where $u$ and $v$ are not adjacent, then $D=\bigcup D{(x_1,x_2,y_1,y_2,z_1,z_2)}$ where the union is taken over all tuples $(x_1,x_2,y_1,y_2,z_1,z_2)$ such that $x_1+x_2=u-1$, $y_1+y_2=v-u-1$, and $z_1+z_2=n-v$. Since this is also a disjoint union, 
\begin{align*}
|D|&=\sum|D{(x_1,x_2,y_1,y_2,z_1,z_2)}|\\
&=\sum_{\substack{x_1+x_2=u-1\\y_1+y_2=v-u-1\\z_1+z_2=n-v}}(y_1+z_2)!(x_1+z_1)!(x_2+y_2)!{u-1 \choose x_1}{v-u-1 \choose y_1}{n-v \choose z_1}.
\end{align*}

\end{proof}
The following lemma is proved in a similar fashion to Lemma \ref{lem5}, so we omit the proof.
\begin{lemma}\label{lem5}
The permutation graph $G_{\pi}$ has $\{u,v\}$ as a dominating set where $u < v$, and $u$ and $v$ are adjacent if and only if every element is a private neighbor of $u$, a private neighbor of $v$, or a shared neighbor of $u$ and $v$, and 
\begin{enumerate}
	\item The private neighbors of $u$ are greater than $u$,
    \item The private neighbors of $v$ are less than $v$.
\end{enumerate}
\end{lemma}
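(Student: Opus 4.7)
The plan is to mirror the proof of Lemma \ref{lem4} step by step, with the crucial adjustment that the adjacency of $u$ and $v$ (with $u<v$) now forces $\pi^{-1}(u)>\pi^{-1}(v)$ instead of $\pi^{-1}(u)<\pi^{-1}(v)$, reversing the inequality that drove all the case analysis about where private and shared neighbors can sit.

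For the forward direction, I would assume $\{u,v\}$ dominates $G_\pi$ and that $u$ and $v$ are adjacent. Then every vertex lies in $N[u]\cup N[v]$, so it is a private neighbor of $u$, a private neighbor of $v$, or a shared neighbor of both; in particular, $u$ and $v$ themselves are shared neighbors, since adjacency puts each of them in the closed neighborhood of the other. I would then prove conditions (1) and (2) by contrapositive. For (1), if some $i<u$ were adjacent to $u$, then $i<u$ forces $\pi^{-1}(i)>\pi^{-1}(u)$; combined with $\pi^{-1}(u)>\pi^{-1}(v)$ this yields $\pi^{-1}(i)>\pi^{-1}(v)$, and since $i<v$ we conclude that $i$ is also adjacent to $v$, so $i$ cannot be a private neighbor of $u$. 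Condition (2) follows by the symmetric argument: if $j>v$ is adjacent to $v$, then $\pi^{-1}(j)<\pi^{-1}(v)<\pi^{-1}(u)$, which together with $j>u$ makes $j$ adjacent to $u$.

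For the converse, I would assume the stated partition of the vertex set together with conditions (1) and (2). Immediately $V(G_\pi)\subseteq N[u]\cup N[v]$, so $\{u,v\}$ is a dominating set. To recover the adjacency of $u$ and $v$, observe that $u$ is not greater than $u$, so by (1) it cannot be a private neighbor of $u$; hence $u$ must be a shared neighbor, meaning $u\in N[v]$, and so $uv$ is an edge of $G_\pi$. Applying (2) to $v$ yields the same conclusion.

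I do not anticipate any serious obstacle here, since the argument follows the Lemma \ref{lem4} template nearly verbatim. The only small wrinkle not present in Lemma \ref{lem4} is the last step of the converse: since adjacency of $u$ and $v$ is now part of the conclusion rather than a separately assumed hypothesis, it must be extracted from condition (1) (or (2)) applied to $u$ (or $v$) itself, and I would make sure to flag this observation explicitly.
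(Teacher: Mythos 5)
Your proposal is correct and matches the paper's intent exactly: the paper omits this proof, stating only that it is "proved in a similar fashion" to Lemma \ref{lem4}, and your argument is precisely that adaptation, with the inequality $\pi^{-1}(u)>\pi^{-1}(v)$ replacing $\pi^{-1}(u)<\pi^{-1}(v)$ throughout. Your added observation---that in the converse the adjacency of $u$ and $v$ must be recovered by applying condition (1) (or (2)) to $u$ (or $v$) itself, since $u$ cannot be a private neighbor of $v$ and so must be a shared neighbor---is exactly the one non-routine point, and you handle it correctly.
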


Notice that there are no conditions on the shared neighbors of $u$ and $v$.
\begin{theorem}\label{thm2}
The number of permutation graphs on $n$ vertices that are dominated by $\{u,v\}$ where $u$ and $v$ are adjacent and $u < v$ is
\[
\sum_{\substack{x_1+x_2+x_3=v-u-1\\y_1+y_2=u-1\\z_1+z_2=n-v}}(x_1+z_2)!(z_1+x_3+y_1)!(y_2+x_2)!{v-u-1 \choose x_1}{v-u-1-x_1\choose x_2}{u-1 \choose y_1}{n-v \choose z_1}
\]
for nonnegative integers $x_1$, $x_2$, $x_3$, $y_1$, $y_2$, $z_1$, and $z_2$.
\end{theorem}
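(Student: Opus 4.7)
The plan is to follow the template of Theorem~\ref{thm1}, adapted to the adjacent case. Since $u<v$ and $\{u,v\}\in E(G_\pi)$, we have $\pi^{-1}(v)<\pi^{-1}(u)$, so the one-line notation of $\pi$ splits into three position segments: positions to the left of $v$, positions strictly between $v$ and $u$, and positions to the right of $u$. By Lemma~\ref{lem5}, every vertex $w\notin\{u,v\}$ is a private neighbor of $u$, a private neighbor of $v$, or a shared neighbor of $u$ and $v$; conditions~1 and~2 of that lemma rule out values $w<u$ from being private neighbors of $u$ and values $w>v$ from being private neighbors of $v$.

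First I would examine each of the seven remaining combinations of value class ($w<u$, $u<w<v$, or $w>v$) and role (private of $u$, private of $v$, or shared). For each combination I would translate the required adjacencies with $u$ and with $v$ into constraints on $\pi^{-1}(w)$ relative to $\pi^{-1}(u)$ and $\pi^{-1}(v)$, which pins $w$ to exactly one of the three position segments. The outcome should be: a $w<u$ that is a private neighbor of $v$ lies strictly between $v$ and $u$, while a shared $w<u$ lies to the right of $u$; a $u<w<v$ that is a private neighbor of $u$, shared, or private neighbor of $v$ lies respectively to the left of $v$, between $v$ and $u$, or to the right of $u$; a $w>v$ that is a private neighbor of $u$ lies between $v$ and $u$, while a shared $w>v$ lies to the left of $v$. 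I would also check the converse: if $w$ is placed in the segment dictated by its value class and chosen role, then its adjacencies with $u$ and $v$ are exactly those demanded by that role, so any reordering of values within a single segment preserves the classification and the domination property.

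Next I would introduce parameters analogous to those in Theorem~\ref{thm1}. Let $y_1,y_2$ count private-of-$v$ and shared elements among values less than $u$; let $x_1,x_2,x_3$ count private-of-$u$, private-of-$v$, and shared elements among values in $\{u+1,\dots,v-1\}$; and let $z_1,z_2$ count private-of-$u$ and shared elements among values greater than $v$. These satisfy $y_1+y_2=u-1$, $x_1+x_2+x_3=v-u-1$, and $z_1+z_2=n-v$, and by the case analysis of the previous step the left-of-$v$ segment contains $x_1+z_2$ entries, the middle segment contains $y_1+x_3+z_1$ entries, and the right-of-$u$ segment contains $y_2+x_2$ entries.

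For fixed values of the parameters I would count: there are $\binom{u-1}{y_1}\binom{v-u-1}{x_1}\binom{v-u-1-x_1}{x_2}\binom{n-v}{z_1}$ ways to pick which specific values play which role, followed by $(x_1+z_2)!(y_1+x_3+z_1)!(y_2+x_2)!$ ways to arrange the chosen values within the three segments; summing over all valid tuples gives the stated expression. I expect the main obstacle to be the bookkeeping in the seven-case position analysis, in particular ensuring the equivalence between ``value class plus role'' and ``value class plus position segment'' holds in both directions, so that distinct parameter tuples index disjoint sets of permutations whose union is exactly the collection being counted.
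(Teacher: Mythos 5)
Your proposal is correct and follows essentially the same route as the paper's own proof: both invoke Lemma~\ref{lem5}, classify each vertex by value class and role (the same seven cases, with the same resulting position segments), introduce the identical parameters $x_i,y_i,z_i$, and count choices of values times arrangements within the three segments. The case analysis you sketch matches the paper's sets $A_1,A_2,B_1,B_2,C_1,C_2,C_3$ exactly, so no further comparison is needed.
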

\begin{proof}
Let $D$ be the set of all permutations on $[n]$ whose  graphs are dominated by $\{u,v\}$ where $u$ and $v$ are adjacent. Let $\pi$ be a permutation of $[n]$ where $u$ and $v$ are adjacent in $G_{\pi}$. Let $A^{\pi}$ be the set of private neighbors of $u$, $B^{\pi}$ the private neighbors of $b$, and $C^{\pi}$ the shared neighbors of $u$ and $v$ other than $u$ and $v$. By Lemma \ref{lem5}, $A^{\pi}$, $B^{\pi}$, and $C^{\pi}$ are the disjoint unions
\begin{align*}
A^{\pi}&=A_1^{\pi} \cup A_2^{\pi},\\
B^{\pi}&=B_1^{\pi} \cup B_2^{\pi},\\
C^{\pi}&=C_1^{\pi} \cup C_2^{\pi} \cup C_3^{\pi},
\end{align*}
where
\begin{align*}
A_1^{\pi}&=\{x \in A^{\pi}\ :\ u<x<v\},&
A_2^{\pi}&=\{x \in A^{\pi}\ :\ x>v\},\\
B_1^{\pi}&=\{x \in B^{\pi}\ :\ x<u\},&
B_2^{\pi}&=\{x \in B^{\pi}\ :\ u<x<v\},\\
\end{align*}\\[-45pt]
\[
C_1^{\pi}=\{x \in C^{\pi}\ :\ x<u\},\
C_2^{\pi}=\{x \in C^{\pi}\ :\ u<x<v\},\
C_3^{\pi}=\{x \in C^{\pi}\ :\ x>v\},\
\]
and
\begin{align*}
&|A_1^{\pi}|+|B_2^{\pi}|+|C_2^{\pi}|=v-u-1,\\
&|B_1^{\pi}|+|C_1^{\pi}|=u-1,\\
&|A_2^{\pi}|+|C_3^{\pi}|=n-v.
\end{align*}
Now denote $D{(x_1,x_2,x_3,y_1,y_2,z_1,z_2)}$ where $x_1$, $x_2$, $x_3$, $y_1$, $y_2$, $z_1$, and $z_2$ are natural numbers and $x_1+x_2+x_3=v-u-1$, $y_1+y_2=u-1$, and $z_1+z_2=n-v$ as the set of permutations $\pi$ on $[n]$ such that $|A_1^{\pi}|=x_1$, $|B_2^{\pi}|=x_2$, $|C_2^{\pi}|=x_3$, $|B_1^{\pi}|=y_1$, $|C_1^{\pi}|=y_2$, $|A_2^{\pi}|=z_1$, and $|C_3^{\pi}|=z_2$. Then a permutation, $\tau$, is an element of $D{(x_1,x_2,x_3,y_1,y_2,z_1,z_2)}$ if and only if 
\[
\tau=~[~\underline{\ \ },\dots,\underline{\ \ },v,\underline{\ \ },\dots,\underline{\ \ },u,\underline{\ \ },\dots,\underline{\ \ }~]
\]
where every element in $A_1^{\pi}$ and $C_3^{\pi}$ lies to the left of $v$ in the one-line notation of $\pi$ in any arrangement, every element in $A_2^{\pi}$, $B_1^{\pi}$, and $C_2^{\pi}$ lies in between $v$ and $u$ in any arrangement, and every element in $C_1^{\pi}$ and $B_2^{\pi}$ lies to the right of $u$ in any arrangement. Since there are ${v-u-1 \choose x_1}{v-u-1-x_1\choose x_2}$ ways to choose the elements in $A_1^{\pi}, B_2^{\pi}$, and $C_2^{\pi}$; ${u-1 \choose y_1}$ ways to choose elements in $B_1^{\pi}$ and $C_1^{\pi}$; and ${n-v \choose z_1}$ ways to choose elements in $A_2^{\pi}$ and $C_3^{\pi}$, 
\[
|D{(x_1,x_2,x_3,y_1,y_2,z_1,z_2)}|=(x_1+z_2)!(z_1+x_3+y_1)!(y_2+x_2)!{v-u-1 \choose x_1}{v-u-1-x_1\choose x_2}{u-1 \choose y_1}{n-v \choose z_1}.
\]
Now let $D$ be the set of all permutations on $[n]$ whose permutation graph is dominated by $\{u,v\}$ where $u$ and $v$ are adjacent. Then $D$ is the disjoint union 
\[
D=\bigcup D{(x_1,x_2,x_3,y_1,y_2,z_1,z_2)}
\]
where the union is taken over all tuples such that $x_1+x_2+x_3=v-u-1$, $y_1+y_2=u-1$, and $z_1+z_2=n-v$. Therefore,
\begin{align*}
|D|&= \sum |D{(x_1,x_2,x_3,y_1,y_2,z_1,z_2)}| \\
&=\sum_{\substack{x_1+x_2+x_3=v-u-1\\y_1+y_2=u-1\\z_1+z_2=n-v}}(x_1+z_2)!(z_1+x_3+y_1)!(y_2+x_2)!{v-u-1 \choose x_1}{v-u-1-x_1\choose x_2}{u-1 \choose y_1}{n-v \choose z_1}.
\end{align*}
\end{proof}
We can now deduce the following, extending Proposition $\ref{prop1}$ to dominating sets of size two.
\begin{corollary}
The number of permutation graphs on $n$ vertices that are dominated by $\{u,v\}$  is given by the sum of the expressions in Theorem \ref{thm1} and Theorem \ref{thm2}.
\end{corollary}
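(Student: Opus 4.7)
The plan is to observe that this corollary follows immediately from partitioning the set of permutation graphs dominated by $\{u,v\}$ into two disjoint subsets based on whether $u$ and $v$ are adjacent in $G_\pi$, and then applying Theorems \ref{thm1} and \ref{thm2} to each piece.

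More precisely, I would let $D$ denote the set of all permutations $\pi \in S_n$ whose permutation graph $G_\pi$ is dominated by $\{u,v\}$, and write $D = D_{\text{adj}} \sqcup D_{\text{non}}$, where $D_{\text{adj}}$ is the subset for which $u$ and $v$ are adjacent in $G_\pi$, and $D_{\text{non}}$ is the subset for which they are not. These two subsets are manifestly disjoint, since in any fixed permutation graph $G_\pi$ the vertices $u$ and $v$ are either adjacent or not, and together they exhaust $D$. Theorem \ref{thm1} counts $|D_{\text{non}}|$ exactly and Theorem \ref{thm2} counts $|D_{\text{adj}}|$ exactly, so $|D| = |D_{\text{non}}| + |D_{\text{adj}}|$ is the sum of the two displayed expressions.

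There is essentially no obstacle here: once the two preceding theorems are in hand, the corollary is a one-line disjoint-union argument. The only thing worth flagging explicitly in the write-up is that the two cases are genuinely disjoint (so no inclusion-exclusion correction is needed) and that they are jointly exhaustive of the event ``$\{u,v\}$ dominates $G_\pi$,'' both of which are immediate from the definition of adjacency in a permutation graph.
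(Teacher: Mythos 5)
Your proposal is correct and is exactly the (implicit) reasoning behind the corollary in the paper, which states it without proof as an immediate consequence of Theorems \ref{thm1} and \ref{thm2}: the permutations whose graphs are dominated by $\{u,v\}$ split disjointly according to whether $u$ and $v$ are adjacent, and the two theorems count the two pieces. Nothing further is needed.
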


We now consider counts for the number of permutation graphs on $n$ vertices that are efficiently dominated by a set $A$.

\begin{lemma}\label{lem6}
Let $G_{\pi}$ be a permutation graph on $n$ vertices, and let $A=\{a_1,\dots,a_k\}\subseteq V(G_{\pi})$ where $a_1<\cdots <a_k$. The set $A$ is an efficient dominating set of $G_{\pi}$ if and only if $A$ is a dominating set of $G_{\pi}$, no two elements of $A$ are adjacent, no two elements of $A$ have shared neighbors, and the private neighbors of $A$ satisfy the following conditions.
\begin{enumerate}
	\item The private neighbors of $a_1$ are less than $a_2$, and the private neighbors of $a_k$ are greater than $a_{k-1}$,
    \item For $2 \leq i \leq k-1$, the private neighbors of $a_i$ are greater than $a_{i-1}$ and less than $a_{i+1}$.
\end{enumerate}
\end{lemma}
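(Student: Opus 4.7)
The plan is to prove both directions of the biconditional separately, noting that the backward direction is essentially immediate and the forward direction splits into two parts. For the forward direction, assume $A$ is an efficient dominating set. By definition, $A$ is a dominating set and $N[a_i] \cap N[a_j] = \emptyset$ for all $i \neq j$. Since $a_i \in N[a_i]$, no element of $A$ can lie in another's closed neighborhood, so no two elements of $A$ are adjacent; and since $N[a_i]$ and $N[a_j]$ share no open neighbors either, no two elements of $A$ have a shared neighbor. Three of the four conditions on the right-hand side are thus handled directly from the definition.

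The substantive work is to establish the positional conditions (1) and (2) on the private neighbors. The plan is to argue each by contradiction, in the same spirit as the contrapositives used in Lemma \ref{lem4}. For condition (1), suppose $p$ is a private neighbor of $a_1$ with $p > a_2$; then $p > a_1$, and since $p$ is adjacent to $a_1$ we have $\pi^{-1}(a_1) > \pi^{-1}(p)$, while non-adjacency of $p$ and $a_2$ (no shared neighbors) forces $\pi^{-1}(a_2) < \pi^{-1}(p)$. Combining yields $\pi^{-1}(a_2) < \pi^{-1}(a_1)$, contradicting $a_1 < a_2$ with $a_1, a_2$ non-adjacent. The analogous inequality handles private neighbors of $a_k$. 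For condition (2) on an intermediate $a_i$ with $2 \leq i \leq k-1$, the same scheme applies: if a private neighbor $p$ of $a_i$ satisfied $p < a_{i-1}$, then $p < a_i$ with $p$ adjacent to $a_i$ gives $\pi^{-1}(p) > \pi^{-1}(a_i)$, while $p < a_{i-1}$ with $p$ not adjacent to $a_{i-1}$ gives $\pi^{-1}(p) < \pi^{-1}(a_{i-1})$, forcing $\pi^{-1}(a_{i-1}) > \pi^{-1}(a_i)$ and contradicting non-adjacency of $a_{i-1}$ and $a_i$; the symmetric case $p > a_{i+1}$ is handled identically.

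For the backward direction, assume $A$ is a dominating set, no two elements of $A$ are adjacent, and no two elements of $A$ share a neighbor. Then for distinct $a_i, a_j \in A$, the non-adjacency ensures $a_i \notin N[a_j]$ and $a_j \notin N[a_i]$, and the absence of shared neighbors ensures $N(a_i) \cap N(a_j) = \emptyset$, so $N[a_i] \cap N[a_j] = \emptyset$. Hence $A$ is efficient. The positional conditions (1) and (2) are not needed here since they are consequences of the efficient dominating set property; they are listed in the statement to give the structural information used for enumeration later.

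The main obstacle is simply keeping the book-keeping of inequalities straight across the three cases in the forward direction; there is no conceptual difficulty beyond adapting the Lemma \ref{lem4} contradiction template to the consecutive pairs $(a_i, a_{i-1})$ and $(a_i, a_{i+1})$ within the sorted tuple, and no additional structural ideas are required.
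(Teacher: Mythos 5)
Your proposal is correct and follows essentially the same route as the paper: both directions are handled the same way, with the forward direction reduced to an inequality chase on $\pi^{-1}$ (the paper phrases the contradiction as the offending private neighbor becoming a shared neighbor of two elements of $A$, while you phrase it as forcing two elements of $A$ to be adjacent, but these are the same computation). The backward direction is likewise immediate in both treatments.
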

\begin{proof}
Assume $A$ is an efficient dominating set of $G_{\pi}$, then $A$ is a dominating set, no two elements of $A$ are adjacent, and no two elements of $A$ have shared neighbors. Since no two elements of $A$ are adjacent, for $1 \leq i \leq n-1$, $a_i$ lies to the left of $a_{i+1}$ in the one-line notation of $\pi$. To prove statement $1$, assume $a_1$ has a private neighbor $u$ greater than $a_2$. Then $u$ lies to the left of $a_1$ and hence to the left of $a_2$ contradicting that no two elements of $A$ have shared neighbors. A similar argument applies for the second statement and the second half of the first statement.

The other direction of the lemma follows immediately.
\end{proof}

\begin{theorem}
The number of permutation graphs on $n$ vertices that are efficiently dominated by $A=\{a_1,\dots,a_k\}$ where $a_1<\cdots <a_k$ is
\[
\sum_{x_{1}+x_{2}=a_2-a_1-1}x_{1}!((a_1-1)+(n-a_2))!x_{2}!{a_{2}-a_1-1 \choose x_{1}}
\]
if $k=2$,
\[
\sum_{\substack{x_{1,1}+x_{1,2}=a_2-a_1-1\\x_{2,1}+x_{2,2}=a_3-a_2-1}}x_{1,1}!((a_1-1)+x_{2,1})!((n-a_3)+x_{1,2})!x_{2,2}!{a_{2}-a_1-1 \choose x_{1,1}}{a_{3}-a_2-1 \choose x_{2,1}}
\]
if $k=3$, and
\[
\sum_{\substack{x_{1,1}+x_{1,2}=a_2-a_1-1\\x_{2,1}+x_{2,2}=a_3-a_2-1\\ \vdots \\ x_{k-1,1}+x_{k-1,2}=a_k-a_{k-1}-1}}x_{1,1}!((a_1-1)+x_{2,1})!x_{k-1,2}!((n-a_k)+x_{k-2,2})!\prod_{i=2}^{k-2}(x_{i,2}+x_{i+2,1})!\prod_{j=1}^{k-1}{a_{j+1}-a_j-1 \choose x_{j,1}}
\]
if $k \geq 4$
where all $x_{i,j}$'s are natural numbers.
\end{theorem}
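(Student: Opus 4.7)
The plan is to mimic Theorems \ref{thm1} and \ref{thm2}, using Lemma \ref{lem6} to pin down the structure of an efficiently dominated permutation graph. By that lemma, $a_1<\cdots<a_k$ appear in this left-to-right order in the one-line notation of $\pi$, and every vertex outside $A$ is a private neighbor of a unique $a_i$ whose value lies in $(a_{i-1},a_{i+1})$, with the conventions $a_0:=0$ and $a_{k+1}:=n+1$. Consequently, the $a_1-1$ values less than $a_1$ are forced to be private neighbors of $a_1$, the $n-a_k$ values greater than $a_k$ are forced to be private neighbors of $a_k$, and for each $1\leq j\leq k-1$ the $a_{j+1}-a_j-1$ values strictly between $a_j$ and $a_{j+1}$ split, in $\binom{a_{j+1}-a_j-1}{x_{j,1}}$ ways, into $x_{j,1}$ private neighbors of $a_j$ and $x_{j,2}$ private neighbors of $a_{j+1}$.

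Next, I would unpack the inversion definition of $G_\pi$ to locate each private neighbor inside the one-line notation. A short calculation, entirely analogous to the case analysis in Theorems \ref{thm1} and \ref{thm2}, shows that a private neighbor of $a_i$ with value greater than $a_i$ must sit in the gap immediately before $a_i$ (so between $a_{i-1}$ and $a_i$, or left of $a_1$ when $i=1$), whereas a private neighbor of $a_i$ with value less than $a_i$ must sit in the gap immediately after $a_i$ (so between $a_i$ and $a_{i+1}$, or right of $a_k$ when $i=k$). Collecting these contributions gap by gap yields the size of each of the $k+1$ gaps as an explicit expression in the $x_{j,\ell}$, $a_1-1$, and $n-a_k$; since within any gap the elements can be permuted arbitrarily without disturbing the adjacency pattern, the number of permutations realizing a fixed partition is the product of the gap-size factorials.

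Summing the product of the binomial factors and gap-size factorials over all valid tuples $(x_{j,\ell})$ then produces the stated formulas. The three regimes $k=2$, $k=3$, and $k\geq 4$ must be presented separately because the two outermost gaps interact with their neighbors in qualitatively different ways: when $k=2$ the boundary gaps between $a_1,a_2$ (reading from the left) and between $a_1,a_2$ (reading from the right) coincide, producing a single middle gap of size $(a_1-1)+(n-a_2)$; when $k=3$ there are no genuinely ``interior'' middle gaps so the product $\prod_{i}(x_{i,2}+x_{i+2,1})!$ is absent; and only for $k\geq 4$ does the general recursive pattern of middle gaps, each of size $x_{i-1,2}+x_{i+1,1}$, appear.

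The main obstacle is keeping the bookkeeping of which parameter feeds into which gap straight, particularly at the two boundary gaps (between $a_1,a_2$ and between $a_{k-1},a_k$), where the automatic contributions $a_1-1$ and $n-a_k$ combine with the chosen $x_{2,1}$ and $x_{k-2,2}$. Once the gap-size expressions are verified, sufficiency, namely that every permutation built in this way really has $A$ as an efficient dominating set, is immediate from the constructive positional description together with Lemma \ref{lem6}.
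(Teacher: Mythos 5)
Your proposal matches the paper's proof in all essentials: both invoke Lemma \ref{lem6} to force each non-dominating vertex to be a private neighbor of some $a_i$ with value in $(a_{i-1},a_{i+1})$, use the inversion condition to place larger-valued private neighbors in the gap just before $a_i$ and smaller-valued ones in the gap just after, and then count by choosing the splits with binomial coefficients and multiplying the factorials of the resulting gap sizes. Your explanation of why the $k=2$, $k=3$, and $k\geq 4$ cases separate (the special gaps adjacent to $a_1$ and $a_k$ collide or leave no interior gaps) is also consistent with the paper's formulas, so the argument is correct and essentially identical.
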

\begin{proof}
Let $\pi$ be a permutation. For all $1 \leq i \leq k$, denote $A_i^{\pi}$ as the private neighbors of $a_i$, not including $a_i$. Let $A_{i,1}^\pi$ and $A_{i,2}^\pi$ be as follows:
	\begin{center}
		\begin{tabular}{c|c|c}
			$i$&$A_{i,1}^\pi$&$A_{i,2}^\pi$ \\
            \hline
            1 & $\{x \in A_1^{\pi}\ :\ x<a_1\}$ & $\{x \in A_1^{\pi}\ :\ a_1<x<a_2\}$ \\
            $2\leq i \leq k-1$ & $\{x \in A_i^{\pi}\ :\ a_{i-1}<x<a_{i}\}$ & $\{x \in A_i^{\pi}\ :\ a_i<x<a_{i+1}\}$\\
            $k$ & $\{x \in A_k^{\pi}\ :\ a_{k-1}<x<a_k\}$ & $\{x \in A_k^{\pi}\ :\ a_k<x\}$.
		\end{tabular}
	\end{center}
    	By Lemma \ref{lem6}, $A$ is an efficient dominating set of $G_{\pi}$ if and only if
\begin{enumerate}
	\item $a_i$ lies to the left of $a_{i+1}$ in the one-line notation of $\pi$ for $1 \leq i \leq k-1$,
    \item each $A_i^{\pi}$ is the disjoint union
\begin{align*}
A_i^{\pi}&=A_{i,1}^{\pi} \cup A_{i,2}^{\pi},
\end{align*}
and for $1 \leq i \leq k-1$,
\begin{align*}
&|A_{i,2}^{\pi}|+|A_{i+1,1}^{\pi}|=a_{i+1}-a_i-1,\\
&|A_{1,1}^{\pi}|=a_1-1,\\
&|A_{k,2}^{\pi}|=n-a_k.
\end{align*}
\end{enumerate}
Now let $x=(x_{1,1},x_{1,2},\dots,x_{k-1,1},x_{k-1,2})$ be a tuple of natural numbers such that $x_{i,1}+x_{i,2}=a_{i+1}-a_i-1$ for all $1 \leq i \leq k-1$, and denote $D(x)$ as the set of all permutations $\tau$ whose permutation graphs are efficiently dominated by $A$ and for $1 \leq i \leq k-1$,
\begin{align*}
&|A_{i+1,1}^{\tau}|=x_{i,2},\\
&|A_{i,2}^{\tau}|=x_{i,1},\\
&|A_{1,1}^{\tau}|=a_1-1,\\
&|A_{k,2}^{\tau}|=n-a_k.
\end{align*}
Now we have that the elements lying between $a_i$ and $a_{i+1}$ in the one-line notation of $\tau$ are precisely the elements in $A_{i,1}^{\tau}$ and $A_{i+1,2}^{\tau}$, which can be in any arrangement between $a_i$ and $a_{i+1}$. We also have that precisely the elements in $A_{1,2}^{\tau}$ lie to the left of $a_1$ in any arrangement, and precisely the elements in $A_{k,1}^{\tau}$ lie to the right of $a_k$ in any arrangement. Also there are ${a_{i+1}-a_i-1 \choose x_{i,1}}$ ways to choose the $x_{i,1}$ elements for $A_{i,2}^{\tau}$ and the elements for $A_{i+1,1}^{\tau}$ are then determined. Thus, we have that 
\[
|D|=x_{1,1}!((a_1-1)+x_{2,1})!x_{k-1,2}!((n-a_k)+x_{k-2,2})!\prod_{i=2}^{k-2}(x_{i,2}+x_{i+2,1})!\prod_{j=1}^{k-1}{a_{j+1}-a_j-1 \choose x_{j,1}}
\]
when $k \geq 4$,
\[
x_{1,1}!((a_1-1)+x_{2,1})!((n-a_3)+x_{1,2})!x_{2,2}!{a_{2}-a_1-1 \choose x_{1,1}}{a_{3}-a_2-1 \choose x_{2,1}}
\]
when $k=3$, and 
\[
x_{1}!((a_1-1)+(n-a_2))!x_{2}!{a_{2}-a_1-1 \choose x_{1}}
\]
when $k=2$.
Now if we let $D$ be the set of all permutations on $[n]$ whose permutation graphs are efficiently dominated by $A$, then $D$ is the disjoint union 
\[
D=\bigcup_t D(t) 
\]
where the union is taken over all tuples $t$ that satisfy the same conditions as $x$ above. Therefore,
\[
|D|=\sum_t |D(t)|,
\] which gives us the desired result.
\end{proof}
\section{Connected Permutation Graphs with Domination Number $n/2$}
In Section 2, we found the exact number of connected permutation graphs on $n$ vertices that have domination number $1$. It is a well-known result that if a connected graph on $n$ vertices has domination number $k$, then $1 \leq k \leq \lfloor n/2 \rfloor$, (\cite{HaynesFundamentals}). In this section, we find the exact number of connected permutation graphs on $n$ vertices with the upper extreme domination number value, $n/2$, for even $n$.
\begin{lemma}\label{degree}
Let $1\leq i\leq n$, and let $\pi\in S_n$. If the degree of $i$ in $G_\pi$ is $k$, then $|\pi^{-1}(i)-i|\leq k$ and $\pi^{-1}(i)-i\mod{k}{2}$. In particular, if $i$ is a leaf in $G_\pi$, then $\pi^{-1}(i)\in\{i-1,i+1\}$.
\end{lemma}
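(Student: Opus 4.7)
The plan is to decompose the degree of $i$ in $G_\pi$ into two pieces corresponding to the two ways $i$ can be part of an inversion, then to show directly by a counting argument that $\pi^{-1}(i)-i$ equals the difference of these two pieces.

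First I would define
\[
L_i=\#\{j:j<i,\ \pi^{-1}(j)>\pi^{-1}(i)\},\qquad
R_i=\#\{j:j>i,\ \pi^{-1}(j)<\pi^{-1}(i)\}.
\]
By the definition of $G_\pi$, every neighbor of $i$ falls into exactly one of these two sets, so the degree is $k=L_i+R_i$. Next I would count the positions before $\pi^{-1}(i)$ in two ways: among the $\pi^{-1}(i)-1$ positions strictly to the left of $i$ in one-line notation, the number that hold values larger than $i$ is exactly $R_i$, and the number holding values smaller than $i$ is $(i-1)-L_i$ because the total count of values less than $i$ is $i-1$ and $L_i$ of them appear to the right of $i$. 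Adding these,
\[
\pi^{-1}(i)-1=(i-1-L_i)+R_i,
\]
which rearranges to the key identity $\pi^{-1}(i)-i=R_i-L_i$.

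From this identity both claims follow immediately. The triangle inequality gives $|\pi^{-1}(i)-i|=|R_i-L_i|\leq R_i+L_i=k$, and the parity claim follows from $\pi^{-1}(i)-i=R_i-L_i=(R_i+L_i)-2L_i=k-2L_i\equiv k\pmod 2$. Specializing to $k=1$, one gets $|\pi^{-1}(i)-i|\leq 1$ and $\pi^{-1}(i)-i$ odd, forcing $\pi^{-1}(i)-i\in\{-1,1\}$, i.e.\ $\pi^{-1}(i)\in\{i-1,i+1\}$.

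I do not expect any significant obstacle; the only subtlety is being careful with the bookkeeping when re-expressing ``positions to the left of $i$'' in terms of the quantities $L_i,R_i$ and the global counts $i-1$ and $n-i$. Everything else is algebra.
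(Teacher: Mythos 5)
Your proof is correct and follows essentially the same approach as the paper's: both split the neighbors of $i$ into larger elements appearing to its left and smaller elements appearing to its right, and relate these counts to the displacement $\pi^{-1}(i)-i$. Your version is slightly cleaner in that the single identity $\pi^{-1}(i)-i=R_i-L_i$ yields both the bound and the parity at once, whereas the paper argues the two claims separately with a sign case distinction.
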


\begin{proof}
Say $\pi^{-1}(i)-i = j \geq 0$ (the same arguments hold when $j<0$). Then in the one-line notation of $\pi$, $i$ is in the $(i+j)^{\rm th}$ position. There are $n-i$ elements greater than $i$, but only $n-i-j$ positions available to the right of $i$, so at least $j$ will come before $i$, and hence the degree of $i$ is at least $j$. Equality holds if and only if all elements to the right of $i$ are greater than $i$.\\
To show that the parities are the same, we look at the number $\ell$ of elements smaller than $i$ to the right of $i$. Each of these elements forces an element larger than $i$ to come before $i$, so the degree of $i$ is in fact $j+2\ell$, which will always have the same parity as $j$.
\end{proof}

\begin{theorem}[\cite{Fink1985}]\label{halfdomination}
Let $n\geq6$ be an even integer. A connected graph $G$ with $n$ vertices has domination number $n/2$ if and only if the vertex set $V(G)$ can be partitioned into two subsets of size $n/2$, $V_1$ and $V_2$, such that the induced subgraph on $V_1$ is a connected graph, the induced subgraph on $V_2$ is an empty graph, and the edges between $V_1$ and $V_2$ form a perfect matching.
\end{theorem}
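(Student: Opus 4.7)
Suppose the partition $V_1,V_2$ exists. For the upper bound, $V_1$ is a dominating set: every $v\in V_2$ is joined to its unique partner $m(v)\in V_1$ by the perfect matching, so $v\in N(m(v))$. Hence $\gamma(G)\leq |V_1|=n/2$. For the lower bound, since $V_2$ is independent and $v\in V_2$ has exactly one $V_1$-neighbor, $N[v]=\{v,m(v)\}$. Any dominating set must intersect each such closed neighborhood, and the pairs $\{v,m(v)\}$ are disjoint because $m$ is a bijection, forcing $\gamma(G)\geq |V_2|=n/2$.

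\textbf{Reverse direction (setup).} Suppose $\gamma(G)=n/2$. I would take a minimum dominating set $D$ of size $n/2$ chosen to maximize $|E(G[D])|$, and set $V_1=D$ and $V_2=V(G)\setminus D$. The main tool is the private neighbor structure guaranteed by minimality: each $d\in D$ has at least one private neighbor with respect to $D$, which must be $d$ itself whenever $d$ is isolated in $G[D]$ and otherwise must lie in $V_2$. Writing $k$ for the number of isolated vertices of $G[D]$, the remaining $n/2-k$ vertices of $D$ inject into $V_2$ via their private neighbors, and the goal is to show that this injection is a bijection, that no edges exist inside $V_2$, and that $k=0$.

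\textbf{Reverse direction (main steps).} I would first show that $V_2$ is independent. Assuming an edge $uv$ inside $V_2$ for contradiction, I would construct a new set $D'=(D\setminus\{d\})\cup\{u\}$ for a carefully selected $d\in D$ whose private neighbors all lie in $N[u]$; then $D'$ is either a dominating set of size less than $n/2$, contradicting $\gamma(G)=n/2$, or a minimum dominating set with strictly more internal edges than $D$, contradicting the extremal choice of $D$. Once $V_2$ is independent, the count $|V_2|=n/2$, together with the disjointness of private neighbors and the fact that every vertex of $V_2$ requires at least one $D$-neighbor, forces $k=0$ and forces every $v\in V_2$ to have exactly one neighbor in $V_1$, so that the bipartite edges between $V_1$ and $V_2$ form a perfect matching. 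Connectedness of $G[V_1]$ then follows from connectedness of $G$: any $V_1$--$V_1$ path in $G$ that enters $V_2$ must contain a subpath $d_1 v d_2$ with $d_1,d_2\in V_1$ distinct and $v\in V_2$, making $v$ a shared neighbor and contradicting the matching just established.

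\textbf{Main obstacle.} The heart of the plan is the swap step used to force independence of $V_2$. The subtlety is selecting a $d\in D$ whose removal is entirely compensated by the inclusion of $u$, so that no vertex of $V_1$ or of $V_2\setminus\{u\}$ becomes un-dominated. Locating this $d$ requires exploiting the extremal property of $D$ together with the bookkeeping of private neighbors given by minimality; this is where I expect the bulk of the effort to go. The hypothesis $n\geq 6$ enters precisely to exclude $C_4$, for which the reverse direction genuinely fails since the only partitions of $V(C_4)$ into two sets of size $2$ yield either a disconnected $V_1$-side or a non-empty $V_2$-side.
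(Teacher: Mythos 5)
The paper does not prove this statement at all; it is imported from Fink, Jacobson, Kinch and Roberts \cite{Fink1985}, so there is no in-paper argument to compare against and your attempt must stand on its own. Your forward direction does: $V_1$ is a dominating set of size $n/2$, and the closed neighborhoods $N[v]=\{v,m(v)\}$ for $v\in V_2$ are pairwise disjoint, so $\gamma(G)=n/2$. That half is complete and correct.

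The reverse direction, however, contains two genuine gaps. First, the central step---given an edge $uv$ inside $V_2$, producing a $d\in D$ all of whose private neighbors lie in $N[u]$ so that $(D\setminus\{d\})\cup\{u\}$ still dominates and yields a contradiction---is only asserted; you flag it yourself as the main obstacle. It cannot be routine, because for $C_4$ with $D$ equal to two adjacent vertices the swap is always available yet merely produces another minimum dominating set with the same number of internal edges, so no contradiction can arise there; hence the hypothesis $n\ge6$ must be used inside this very step, and your outline gives no indication of how or where. Second, the claim that independence of $V_2$ plus counting ``forces $k=0$'' and a perfect matching is false for a general minimum dominating set: take the tree on $\{d_0,d_1,d_2,v_0,v_1,v_2\}$ with edges $d_1d_2$, $d_0v_0$, $d_1v_0$, $d_1v_1$, $d_2v_2$. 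The closed neighborhoods of $d_0$, $v_1$, $v_2$ are pairwise disjoint, so $\gamma=3=n/2$; the set $D=\{d_0,d_1,d_2\}$ is a minimum dominating set with $V_2=\{v_0,v_1,v_2\}$ independent, yet $v_0$ has two neighbors in $D$, $d_0$ is isolated in $G[D]$, and the cross edges are not a matching. (The graph is the corona of the path $v_0d_1d_2$, so the theorem holds---but for a partition other than $(D,V\setminus D)$.) Your count only places the $n/2-k$ non-isolated vertices of $G[D]$ into $V_2$, leaving $k$ vertices of $V_2$ unconstrained, so nothing forces $k=0$; closing this requires invoking the edge-maximizing choice of $D$ a second time, with another unsupplied exchange argument. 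As written, the reverse direction is a plausible plan rather than a proof.
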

The following lemma gives us a simple criterion that determines if a permutation graph is disconnected.
\begin{lemma}\label{dislemma1}
A permutation graph $G_{\pi}$ on $n$ vertices is disconnected if and only if for some $1 \leq k \leq n-1$, $\{\pi(1),\dots,\pi(k)\}=\{1,\dots,k\}$.
\end{lemma}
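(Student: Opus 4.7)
The plan is to prove the two directions independently. For the ($\Leftarrow$) direction, if $\{\pi(1),\dots,\pi(k)\}=\{1,\dots,k\}$ for some $1\leq k\leq n-1$, then every $i\in\{1,\dots,k\}$ has $\pi^{-1}(i)\leq k$ and every $j\in\{k+1,\dots,n\}$ has $\pi^{-1}(j)>k$, so whenever $i<j$ we get $\pi^{-1}(i)<\pi^{-1}(j)$, hence $\{i,j\}\notin E(G_\pi)$; no edge crosses between $\{1,\dots,k\}$ and $\{k+1,\dots,n\}$, so $G_\pi$ is disconnected.

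For the ($\Rightarrow$) direction, assume $G_\pi$ is disconnected, let $C$ be the connected component of $G_\pi$ containing vertex $1$, and set $k=\max C$; since $G_\pi$ is disconnected, $k<n$. I would establish two sub-claims: (i) $C=\{1,2,\dots,k\}$, and (ii) $\{\pi(1),\dots,\pi(k)\}=\{1,\dots,k\}$. Sub-claim (ii) follows quickly from (i): for any $i\leq k$ and $j>k$, we have $i\in C$ and $j\notin C$, so $\{i,j\}$ is not an edge, and $i<j$ gives $\pi^{-1}(i)<\pi^{-1}(j)$; thus the $k$ positions occupied by the values $1,\dots,k$ are all strictly smaller than those occupied by $k+1,\dots,n$, forcing $\{\pi^{-1}(1),\dots,\pi^{-1}(k)\}=\{1,\dots,k\}$, which is equivalent to (ii).

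Sub-claim (i) is the technical heart and proceeds by contradiction. Suppose some $i$ with $1<i<k$ is not in $C$. Non-adjacency of $i$ with both $1$ and $k$, combined with $1<i<k$, forces $\pi^{-1}(1)<\pi^{-1}(i)<\pi^{-1}(k)$. Fix a path $1=v_0,v_1,\dots,v_m=k$ inside $C$; the sequence $\pi^{-1}(v_0),\dots,\pi^{-1}(v_m)$ starts below $\pi^{-1}(i)$ and ends above it, so at some index $j$ it crosses $\pi^{-1}(i)$, say $\pi^{-1}(v_j)<\pi^{-1}(i)<\pi^{-1}(v_{j+1})$ (the opposite crossing is symmetric). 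Non-adjacency of $i$ with $v_j$ combined with $\pi^{-1}(v_j)<\pi^{-1}(i)$ forces $v_j<i$; non-adjacency of $i$ with $v_{j+1}$ combined with $\pi^{-1}(i)<\pi^{-1}(v_{j+1})$ forces $i<v_{j+1}$; hence $v_j<v_{j+1}$. But $\{v_j,v_{j+1}\}\in E(G_\pi)$ together with $\pi^{-1}(v_j)<\pi^{-1}(v_{j+1})$ requires $v_j>v_{j+1}$ by the definition of the edge set, a contradiction.

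The main obstacle is this crossing argument in sub-claim (i): one must simultaneously track three kinds of constraints, namely the inversions forced by edges along the path, the non-inversions forced by the non-adjacencies of $i$ with the path vertices, and the position-crossing structure of the path around $\pi^{-1}(i)$, and translate each into the correct value inequality. Once the inequalities $v_j<i<v_{j+1}$ are extracted and clash with the inversion forced by the path edge $\{v_j,v_{j+1}\}$, the proof closes; everything after sub-claim (i) is routine bookkeeping.
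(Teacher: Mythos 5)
Your proof is correct, and both directions match the paper's overall skeleton (the forward direction hinges, as in the paper, on showing that the component of vertex $1$ is an initial segment $\{1,\dots,k\}$), but your argument for that key sub-claim is genuinely different. The paper takes $s$ to be the \emph{smallest} vertex missing from the component $H$ of $1$, splits $H$ into $V_1=\{i\in H: i<s\}$ and $V_2=\{i\in H: i>s\}$, and observes in one shot that non-adjacency to $s$ pins every $V_1$-position before $\pi^{-1}(s)$ and every $V_2$-position after it, so no edge can join $V_1$ to $V_2$ and hence $V_2=\emptyset$; this avoids any explicit path. You instead take $k=\max C$ and run a path-crossing argument: a missing intermediate vertex $i$ has its position trapped strictly between $\pi^{-1}(1)$ and $\pi^{-1}(k)$, a $1$--$k$ path must cross that position, and the crossing edge $\{v_j,v_{j+1}\}$ is forced to be a non-inversion, a contradiction. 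Your route is more local and hands-on; the paper's is slicker because the separation of $V_1$ from $V_2$ is established globally rather than edge by edge along a path. One small presentational point: you assert $k<n$ immediately from disconnectedness, but a priori the component of $1$ could contain $n$ while missing a middle vertex; the clean fix is to run your sub-claim (i) argument for arbitrary $k=\max C$ (it nowhere uses $k<n$), conclude $C=\{1,\dots,k\}$, and only then note that disconnectedness forces $k\leq n-1$. With that reordering the proof is complete.
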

\begin{proof}
If for some $1 \leq k \leq n-1$, $\{\pi(1),\dots,\pi(k)\}=\{1,\dots,k\}$, then no vertex in $\{1,\dots,k\}$ is adjacent to a vertex in $\{k+1,\dots,n\}$. This implies $G_{\pi}$ is disconnected.

Next, assume that $G_{\pi}$ is disconnected. Let $H$ be the connected component that contains the vertex $1$, and let $s$ be the smallest vertex such that $s\notin V(H)$. Let $V_1=\{i\in V(H):i<s\}$ and $V_2=\{i\in V(H):i>s\}$. If $V_2\neq\emptyset$, then since $s$ is not connected to any vertex of $H$, for all $i\in V_1$, $\pi^{-1}(i)<\pi^{-1}(s)$; and for all $i\in V_2$, $\pi^{-1}(i)>\pi^{-1}(s)$. This implies $V_1$ and $V_2$ are disconnected, a contradiction. Therefore, $V_2=\emptyset$, and by letting $k=s-1$, we have $V(H)=\{1,2,\dotsc,k\}$. Moreover, for all $i\leq k$ and for all $j>k$, since $i$ is not connected to $j$, we have $\pi^{-1}(i)<\pi^{-1}(j)$. As a result, $\{\pi(1),\dots,\pi(k)\}=\{1,\dots,k\}$.
\end{proof}


\begin{definition} Let $n$ be an even positive integer. A graph $G$ with $n$ vertices is a \textit{comb} if the vertex set $V(G)$ can be partitioned into two subsets of size $n/2$, $V_1$ and $V_2$, such that the induced subgraph on $V_1$ is a path, the induced subgraph on $V_2$ is an empty graph, and the edges between $V_1$ and $V_2$ form a perfect matching.
%
%
%
%
%
\end{definition}

\begin{theorem}\label{halfdominationpermutation}
Let $n\geq6$ be an even integer. A connected permutation graph $G_{\pi}$ on $n$ vertices has domination number $n/2$ if and only if $G_{\pi}$ is a comb. Furthermore, there are exactly $2$ such permutation graphs on $n$ vertices with domination number $n/2$.
\end{theorem}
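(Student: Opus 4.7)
My plan is to prove both directions of the equivalence and then count the combs. The easier direction is that any comb has $\gamma(G_\pi)=n/2$: I would observe that a comb's defining partition into a path $V_1$ and independent leaf set $V_2$ is exactly the structure described in Theorem \ref{halfdomination}, which delivers the conclusion.

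The harder direction is showing that every connected permutation graph with domination number $n/2$ is a comb. Here I would first apply Theorem \ref{halfdomination} to obtain a partition $V(G_\pi)=V_1\cup V_2$, after which each $v\in V_2$ is a leaf, and Lemma \ref{degree} forces $\pi^{-1}(v)\in\{v-1,v+1\}$. The core task then becomes showing that the induced subgraph on $V_1$ is a path. I intend to derive detailed positional information for each leaf: when $\pi^{-1}(v)=v-1$, the degree-one condition forces positions $1,\dots,v-2$ to contain only elements less than $v$, with exactly one element less than $v$ at a position $\geq v$ (this being $v$'s unique neighbor $u$, so $u<v$); the case $\pi^{-1}(v)=v+1$ is symmetric. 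Aggregating these constraints across all leaves, I will argue that no $u\in V_1$ can have three or more neighbors inside $V_1$: such a $u$ would have degree at least $4$ in $G_\pi$, and its position $\pi^{-1}(u)$ would be pinned down by its matched leaf to a range incompatible with the bound $|\pi^{-1}(u)-u|\leq \deg(u)$ from Lemma \ref{degree}. This gives maximum degree at most $2$ in the induced subgraph on $V_1$, so $V_1$ is either a path or a cycle; I will rule out the cycle case using the transitive property of permutation graphs (namely, if $a<b<c$ and $\{a,b\},\{b,c\}$ are both edges, then $\{a,c\}$ is an edge), combined with the matched pendants, which forces a triangle and contradicts the cycle being induced.

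Finally, for the count of exactly two combs, I plan to exhibit a canonical comb permutation (generalising the $n=6$ example $\pi=[3,1,4,6,2,5]$) and then verify that its label-reversed counterpart $\pi^R(i)=n+1-\pi(n+1-i)$ produces a second, distinct comb permutation graph. Uniqueness will follow from the rigidity of the earlier positional analysis, since once one chooses whether $1\in V_1$ or $1\in V_2$, the positions of all other leaves and their matched path vertices are essentially forced, and the two admissible choices are interchanged by the label-reversal symmetry. The principal obstacle will be justifying the degree bound for $V_1$; carefully combining Lemma \ref{degree} with the positions pinned by each matched leaf is the most delicate step, and handling the various parities and overlaps cleanly may require nontrivial casework.
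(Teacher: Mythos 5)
Your forward direction (comb $\Rightarrow$ domination number $n/2$ via Theorem \ref{halfdomination}) and your construction of two distinct comb permutations (your $n=6$ example $[3,1,4,6,2,5]$ is exactly the paper's $\sigma$, and its label-reversal is the paper's $\tau$) are sound and agree with the paper. The gap is in the converse, and it sits exactly where you flag the ``most delicate step.'' Your proposed degree bound on the induced subgraph of $V_1$ does not follow from the tools you cite: Lemma \ref{degree} gives $|\pi^{-1}(u)-u|\leq\deg(u)$, so a \emph{larger} degree permits a \emph{larger} positional displacement --- the inequality loosens rather than tightens, so there is no ``incompatibility'' to exploit. Moreover, the matched leaf $v$ of $u$ (say $u<v$ and $\pi^{-1}(v)=v-1$) only forces $\pi^{-1}(u)\geq v$ and forces all elements below $v$ other than $u$ into positions $1,\dots,v-2$; from this one computes $\deg(u)=\pi^{-1}(u)-u$ exactly, with all neighbors of $u$ greater than $u$, and nothing in this single-leaf analysis caps that quantity at $3$. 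Excluding a vertex of $V_1$ with three or more $V_1$-neighbors genuinely requires playing the positional constraints of \emph{several} leaves against each other, which is what the paper's Claims 2--6 do (pairs of leaves at distance $1$ and $2$, then windows of four consecutive integers). Your cycle exclusion also has a hole at the small end: for $n=6$ the putative cycle on $V_1$ \emph{is} a triangle, so ``forces a triangle, contradicting the cycle being induced'' is vacuous there, and for $n=8$ the $4$-cycle is itself a permutation graph, so transitive orientability alone cannot kill it without again invoking the pendants in detail.

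Separately, the count of exactly two is not a corollary of ``rigidity'' that you can wave at --- it \emph{is} the rigidity argument, and it occupies most of the paper's proof. The paper pins down every value of $\pi$ (Claims 7 and 8), repeatedly using the connectivity criterion of Lemma \ref{dislemma1} to rule out placements such as $\pi(1)=2$ or $\pi^{-1}(4i-2)=4i$ that would split off an initial segment, in addition to the leaf-adjacency constraints. Your plan never mentions connectivity beyond citing it implicitly, yet without it several alternative permutations survive the leaf analysis. In short: the skeleton (Theorem \ref{halfdomination}, Lemma \ref{degree}, positional analysis of leaves, a reversal symmetry giving the second graph) is right and parallels the paper, but the two load-bearing steps --- bounding the degree inside $V_1$ and forcing uniqueness --- are asserted rather than proved, and the specific mechanism you propose for the first one does not work as stated.
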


\begin{proof}
First, we show that a comb is a permutation graph. Let $\sigma,\tau\in S_n$ be permutations defined in the following way.\\
\textit{Case $1$:} $n\mod{0}{4}$.
$$\sigma(i)=\left\{\begin{array}{ll}
3&\text{if }i=1,\\
n-2&\text{if }i=n,\\
i-3&\text{if }i>1\text{ and }i\mod{1}{4},\\
i-1&\text{if }i\mod{2}{4},\\
i+1&\text{if }i\mod{3}{4},\text{ and}\\
i+3&\text{if }i<n\text{ and }i\mod{0}{4}.\\
\end{array}\right.$$
$$\tau(i)=\left\{\begin{array}{ll}
1&\text{if }i=3,\\
n&\text{if }i=n-2,\\
i+1&\text{if }i\mod{1}{4},\\
i+3&\text{if }i<n-2\text{ and }i\mod{2}{4},\\
i-3&\text{if }i>3\text{ and }\mod{3}{4},\text{ and}\\
i-1&\text{if }i\mod{0}{4}.\\
\end{array}\right.$$
\textit{Case $2$:} $n\mod{2}{4}$.
$$\sigma(i)=\left\{\begin{array}{ll}
3&\text{if }i=1,\\
n&\text{if }i=n-2,\\
i-3&\text{if }i>1\text{ and }i\mod{1}{4},\\
i-1&\text{if }i\mod{2}{4},\\
i+1&\text{if }i\mod{3}{4},\text{ and}\\
i+3&\text{if }i<n-2\text{ and }i\mod{0}{4}.\\
\end{array}\right.$$
$$\tau(i)=\left\{\begin{array}{ll}
1&\text{if }i=3,\\
n-2&\text{if }i=n,\\
i+1&\text{if }i\mod{1}{4},\\
i+3&\text{if }i<n\text{ and }i\mod{2}{4},\\
i-3&\text{if }i>3\text{ and }\mod{3}{4},\text{ and}\\
i-1&\text{if }i\mod{0}{4}.\\
\end{array}\right.$$
It is easy to check that $G_\sigma$ and $G_\tau$ are combs, where the leaves in $G_\sigma$ are $i\mod{1\text{ or }0}{4}$, and those in $G_\tau$ are $i\mod{2\text{ or }3}{4}$. Together with Theorem \ref{halfdomination}, we prove that a comb is a connected permutation graph with domination number $n/2$. 

Next, let $\pi\in S_n$ be such that $G_\pi$ is a connected permutation graph with domination number $n/2$. By Theorem \ref{halfdomination}, $G_\pi$ has exactly $n/2$ leaves, and all leaves have distinct neighbors that are not leaves.\\

\noindent\underline{Claim $1$.} Exactly one of $1$ and $2$ is a leaf. Similarly, exactly one of $n-1$ and $n$ is a leaf.

\begin{proof}[Proof of Claim $1$]
If $1$ and $2$ are not leaves, then there exist leaves $i$ and $j$ adjacent to $1$ and $2$ respectively. Note that $i,j>2$, so $\pi^{-1}(i)<\pi^{-1}(1)$ and $\pi^{-1}(j)<\pi^{-1}(2)$. If $\pi^{-1}(i)<\pi^{-1}(j)$, then $i$ is adjacent to both $1$ and $2$ in $G_\pi$; if $\pi^{-1}(j)<\pi^{-1}(i)$, then $j$ is adjacent to both $1$ and $2$ in $G_\pi$. Both contradict that $i$ and $j$ are leaves. If $1$ and $2$ are both leaves, then there exist $i$ and $j$ adjacent to $1$ and $2$ respectively. With the same argument, we can deduce that $1$ and $2$ share the same neighbor, violating the structure of $G_\pi$.
\end{proof}

\noindent\underline{Claim $2$.} If there exists $2\leq i\leq n-2$ such that both $i$ and $i+1$ are leaves, then $\pi^{-1}(i)=i-1$ and $\pi^{-1}(i+1)=i+2$.

\begin{proof}[Proof of Claim $2$]
By Lemma \ref{degree}, we have four cases: (a) $\pi^{-1}(i)=i-1$ and $\pi^{-1}(i+1)=i$, (b) $\pi^{-1}(i)=i+1$ and $\pi^{-1}(i+1)=i+2$, (c) $\pi^{-1}(i)=i+1$ and $\pi^{-1}(i+1)=i$, or (d) $\pi^{-1}(i)=i-1$ and $\pi^{-1}(i+1)=i+2$.

In cases (a) and (b), let $x$ be adjacent to $i$. If $x>i$ and $\pi^{-1}(x)<\pi^{-1}(i)$, then $\pi^{-1}(x)<\pi^{-1}(i+1)$, implying that $x$ is also adjacent to $i+1$, violating the structure of $G_\pi$. If $x<i$ and $\pi^{-1}(x)>\pi^{-1}(i)$, then $\pi^{-1}(x)\geq\pi^{-1}(i)+1=\pi^{-1}(i+1)$. As $\pi^{-1}(x)\neq\pi^{-1}(i+1)$, we have $\pi^{-1}(x)>\pi^{-1}(i+1)$, which again implies that $x$ is adjacent to $i+1$. In case (c), $i$ and $i+1$ are adjacent to each other, which is impossible. Therefore, the only possibility is case (d).
\end{proof}

\noindent\underline{Claim $3$.} If there exists $1\leq i\leq n-2$ such that both $i$ and $i+2$ are leaves, then $\pi^{-1}(i)=i-1$ and $\pi^{-1}(i+2)=i+3$. In particular, this implies $1$ and $3$ cannot both be leaves.

\begin{proof}[Proof of Claim $3$]
By Lemma \ref{degree}, we have three cases: (a) $\pi^{-1}(i)=i+1$ and $\pi^{-1}(i+2)=i+3$, (b) $\pi^{-1}(i)=i-1$ and $\pi^{-1}(i+2)=i+1$, or (c) $\pi^{-1}(i)=i-1$ and $\pi^{-1}(i+2)=i+3$.

In case (a), there exists exactly one $j<i+1$ such that $\pi(j)>i$. Note that $\pi(j)<i+2$, otherwise $\pi(j)$ is adjacent to both $i$ and $i+2$. In other words, $\pi(j)=i+1$. As a result, $\{\pi(1),\pi(2),\dotsc,\pi(i+1)\}=\{1,2,\dotsc,i+1\}$, which means $G_\pi$ is disconnected by Lemma \ref{dislemma1}, giving us a contradiction. In case (b), there exists exactly one $j>i+1$ such that $\pi(j)<i+2$. Note that $\pi(j)>i$, otherwise $\pi(j)$ is adjacent to both $i$ and $i+2$. In other words, $\pi(j)=i+1$. As a result, $\{\pi(i+1),\pi(i+2),\dotsc,\pi(n)\}=\{i+1,i+2,\dotsc,n\}$, which again means $G_\pi$ is disconnected, giving us a contradiction. Therefore, the only possibility is case (c).
\end{proof}

\noindent\underline{Claim $4$.} For all $1\leq i\leq n-3$, there are at most two leaves in $\{i,i+1,i+2,i+3\}$.

\begin{proof}[Proof of Claim $4$]
If $i$, $i+1$, and $i+2$ are leaves, by Claim $2$, $\pi^{-1}(i)=i-1$ and $\pi^{-1}(i+1)=i+2$, and by Claim $2$ again, $\pi^{-1}(i+1)=i$ and $\pi^{-1}(i+2)=i+3$, giving us a contradiction. This also implies that it is impossible for all $i+1$, $i+2$, and $i+3$ to be leaves. If $i$, $i+1$, and $i+3$ are leaves, by Claim $2$, $\pi^{-1}(i)=i-1$ and $\pi^{-1}(i+1)=i+2$, and by Claim $3$, $\pi^{-1}(i+1)=i$ and $\pi^{-1}(i+3)=i+4$, giving us a contradiction. A similar argument rules out the possibility that all $i$, $i+2$, and $i+3$ are leaves.
\end{proof}

\noindent\underline{Claim $5$.} For all $1\leq i\leq n/2$, exactly one of $2i-1$ and $2i$ is a a leaf.

\begin{proof}[Proof of Claim $5$]
Assuming the contrary, let $i_1$ be the smallest integer such that there are zero or two leaves in $\{2i_1-1,2i_1\}$. By Claim $1$, there is exactly one leaf in $\{1,2\}$. This implies that there is exactly one leaf in $\{2(i_1-1)-1,2(i_1-1)\}$. By Claim $4$, there cannot be two leaves in $\{2i_1-1,2i_1\}$, so there are zero leaves in $\{2i_1-1,2i_1\}$. Similarly, let $i_2$ be the largest integer such that there are zero or two leaves in $\{2i_2-1,2i_2\}$. Since there is exactly one leaf in $\{n-1,n\}$ by Claim $1$, we can deduce that there are zero leaves in $\{2i_2-1,2i_2\}$. Moreover, by Claim $4$, whenever there is an $i_0$ such that there are two leaves in $\{2i_0-1,2i_0\}$, there will be no leaves in $\{2(i_0-1)-1,2(i_0-1)\}$ and $\{2(i_0+1)-1,2(i_0+1)\}$. As a result, the total number of leaves is strictly less than $n/2$, contradicting our assumption.
\end{proof}

\noindent\underline{Claim $6$.} If there exists $1\leq i\leq n/2-2$ such that $2i$ is a leaf, then $2i+1$ and $2i+4$ are leaves.

\begin{proof}[Proof of Claim $6$]
If $2i+2$ is a leaf, then by Claim $4$, $2i+3$ is not a leaf, and by Claim $5$, $2i+4$ is a leaf. By Claim $3$, $\pi^{-1}(2i)=2i-1$ and $\pi^{-1}(2i+2)=2i+3$, and $\pi^{-1}(2i+2)=2i+1$ and $\pi^{-1}(2i+4)=2i+5$, giving us a contradiction. Hence, if $2i$ is a leaf, then $2i+1$ is a leaf. By Claim $4$, $2i+3$ is not a leaf, and by Claim $5$, $2i+4$ is a leaf.
\end{proof}

\noindent\underline{Claim $7$.} If $1$ is a leaf, then
$$\pi(i)=\left\{\begin{array}{ll}
i-1&\text{if }i\mod{2}{4},\text{ and}\\
i+1&\text{if }i\mod{3}{4}.\\
\end{array}\right.$$
If $2$ is a leaf, then
$$\pi(i)=\left\{\begin{array}{ll}
i+1&\text{if }i\mod{1}{4},\text{ and}\\
i-1&\text{if }i\mod{0}{4}.\\
\end{array}\right.$$

\begin{proof}[Proof of Claim $7$]
If $1$ is a leaf, then $2$ and $3$ are not leaves by Claim $1$ and Claim $3$ respectively. By Claim $5$, we have that $4$ is a leaf. By applying Claim $6$ inductively, the leaves in $G_\pi$ are $i\mod{1\text{ or }0}{4}$. If $2$ is a leaf, then again by applying Claim $6$ inductively, the leaves in $G_\pi$ are $i\mod{2\text{ or }3}{4}$. The proof of this claim is completed by applying Claim $2$.
\end{proof}

\noindent\underline{Claim $8$.} Let $\sigma$ and $\tau$ be those permutations defined at the beginning of the proof of this theorem. If $1$ is a leaf, then $\pi=\sigma$; and if $2$ is a leaf, then $\pi=\tau$.

\begin{proof}[Proof of Claim $8$]
If $1$ is a leaf, then by Claim $7$, $\pi(i)=\sigma(i)$ if $i\mod{2\text{ or }3}{4}$. Hence, 
$$\tiny{\pi=[~\underline{\phantom{|1|}},\underline{\phantom{|}1\phantom{|}},\underline{\phantom{|}4\phantom{|}},\underline{\phantom{|1|}},\underline{\phantom{|1|}},\underline{\phantom{|}5\phantom{|}},\underline{\phantom{|}8\phantom{|}},\underline{\phantom{|1|}},\dotsc,\underline{\phantom{|1|}},\underline{\phantom{|}4i-3\phantom{|}},\underline{\phantom{|}4i\phantom{|}},\underline{\phantom{|1|}},\hspace{-30pt}\underset{\large\substack{\uparrow\\(4i+1)\text{-st position}}}{\underline{\phantom{|1|}}}\hspace{-30pt},\underline{\phantom{|}4i+1\phantom{|}},\underline{\phantom{|}4i+4\phantom{|}},\underline{\phantom{|1|}},\underline{\phantom{|1|}},\underline{\phantom{|}4i+5\phantom{|}},\underline{\phantom{|}4i+8\phantom{|}},\underline{\phantom{|1|}},\dotsc,\underline{\phantom{|1|}}~].}$$
Note that $\pi(1)\neq2$, else $\{\pi(1),\pi(2)\}=\{1,2\}$, which means $G_\pi$ is disconnected. Also, $\pi(1)<5$, else $\pi(1)$ is adjacent to both leaves $4$ and $5$. Therefore, $\pi(1)=3$.

For each $1\leq i\leq\lfloor n/4\rfloor$ we observe the following.
\begin{enumerate}
\item Note that $\pi^{-1}(4i-2)\neq4i$ unless $4i=n$, else $\{\pi(1),\pi(2),\dotsc,\pi(4i)\}=\{1,2,\dotsc,4i\}$, which means $G_\pi$ is disconnected. Also, $\pi^{-1}(4i-2)<4i+3$, else $4i-2$ is adjacent to both leaves $4i$ and $4i+1$. Therefore, unless $4i=n$, $\pi^{-1}(4i-2)=4i+1$, i.e., $\pi(4i+1)=4i-2$. When $4i=n$, $\pi(4i)=4i-2$. 
\item If $4i<n$, note that $\pi(4i)\neq4i+2$ unless $4i+2=n$, else $\{\pi(1),\pi(2),\dotsc,\pi(4i+2)\}=\{1,2,\dotsc,4i+2\}$, which means $G_\pi$ is disconnected. Also, $\pi(4i)<4i+5$, else $\pi(4i)$ is adjacent to both leaves $4i+4$ and $4i+5$. Therefore, unless $4i+2=n$, $\pi(4i)=4i+3$. When $4i+2=n$, $\pi(4i)=4i+2$. As a result, $\pi=\sigma$.
\end{enumerate}

If $2$ is a leaf, then by Claim $7$, $\pi(i)=\tau(i)$ if $i\mod{1\text{ or }0}{4}$. Hence,
$$\tiny{\pi=[~\underline{\phantom{|}2\phantom{|}},\underline{\phantom{|1|}},\underline{\phantom{|1|}},\underline{\phantom{|}3\phantom{|}},\underline{\phantom{|}6\phantom{|}},\underline{\phantom{|1|}},\underline{\phantom{|1|}},\underline{\phantom{|}7\phantom{|}},\dotsc,\underline{\phantom{|}4i-2\phantom{|}},\underline{\phantom{|1|}},\underline{\phantom{|1|}},\underline{\phantom{|}4i-1\phantom{|}},\hspace{-18pt}\underset{\large\substack{\uparrow\\(4i+1)\text{-st position}}}{\underline{\phantom{|}4i+2\phantom{|}}}\hspace{-18pt},\underline{\phantom{|1|}},\underline{\phantom{|1|}},\underline{\phantom{|}4i+3\phantom{|}},\underline{\phantom{|}4i+6\phantom{|}},\underline{\phantom{|1|}},\underline{\phantom{|1|}},\underline{\phantom{|}4i+7\phantom{|}},\dotsc\underline{\phantom{|1|}}~].}$$
Note that $\pi^{-1}(1)\neq2$, else $\{\pi(1),\pi(2)\}=\{1,2\}$, which means $G_\pi$ is disconnected. Also, $\pi^{-1}(1)<5$, else $1$ is adjacent to both leaves $2$ and $3$. Therefore, $\pi^{-1}(1)=3$, i.e., $\pi(3)=1$.

For each $1\leq i\leq\lfloor n/4\rfloor$ we observe the following.
\begin{enumerate}
\item Note that $\pi(4i-2)\neq4i$ unless $4i=n$, else $\{\pi(1),\pi(2),\dotsc,\pi(4i)\}=\{1,2,\dotsc,4i\}$, which means $G_\pi$ is disconnected. Also, $\pi(4i-2)<4i+3$, else $\pi(4i-2)$ is adjacent to both leaves $4i+2$ and $4i+3$. Therefore, unless $4i=n$, $\pi(4i-2)=4i+1$. When $4i=n$, $\pi(4i-2)=4i$.
\item If $4i<n$, note that $\pi^{-1}(4i)\neq4i+2$ unless $4i+2=n$, else $\{\pi(1),\pi(2),\dotsc,\pi(4i+2)\}=\{1,2,\dotsc,4i+2\}$, which means $G_\pi$ is disconnected. Also, $\pi^{-1}(4i)<4i+5$, else $4i$ is adjacent to both leaves $4i+2$ and $4i+3$. Therefore, unless $4i+2=n$, $\pi^{-1}(4i)=4i+3$, i.e., $\pi(4i+3)=4i$. When $4i+2=n$, then $\pi(4i+2)=4i$. As a result, $\pi=\tau$.
\end{enumerate}
\end{proof}

As seen in Claim $8$, $\pi$ is either equal to $\sigma$ or $\tau$, and hence, $G_\pi$ is a comb, and there are two permutation graphs with domination number $n/2$.
\end{proof}

\section{Existence of Connected Permutation Graphs with Domination Number $k$}

As we mentioned earlier, every induced subgraph of a permutation graph is a permutation graph. This implies that every connected component of a permutation graph is a permutation graph. Therefore, disconnected permutation graphs are simply the disjoint union of permutation graphs of smaller order. We are interested in connected permutation graphs because they are ``new" in the sense that they are not disjoint unions of permutation graphs of smaller order. Furthermore, since the disjoint union of permutation graphs is a permutation graph, we know there exists a permutation graph on $n$ vertices with domination number $k$ for all $1 \leq k \leq n$ by simply taking a disjoint union of $k$ permutation graphs with domination number $1$. This is another source of motivation for the study of domination numbers of connected permutation graphs. By the results from Sections $2$ and $4$, we have an exact count for the number of connected permutation graphs on $n$ vertices with domination number $1$ and an exact count for the number of connected permutation graphs with domination number $n/2$ for even $n$. We now show by inductive means that for all $n$ there exists a connected permutation graph on $n$ vertices with domination number $k$ for all $1 \leq k \leq \lfloor n/2 \rfloor$. It is well-known and shown in \cite{HaynesFundamentals} that a graph on $n$ vertices with no isolated points has domination number at most $\lfloor n/2 \rfloor$, so we show that there exists a permutation graph on $n$ vertices with domination number $k$ for each relevant value of $k$.

\begin{theorem}\label{cnctddomk}
Let $G_{\pi}$ be a connected permutation graph on $n$ vertices with domination number $k$, let $D$ be a minimum dominating set of $G_{\pi}$, and let $a \in D$ be the right-most element of $D$ in the one-line notation of $\pi$. The permutation graph $G_{\tau}$ where $\tau$ is obtained by placing $n+1$ immediately to the left of $a$ in the one-line notation is a connected permutation graph on $n+1$ vertices with domination number $k$.
\end{theorem}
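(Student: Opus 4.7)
My plan is to prove three things in turn: $G_\tau$ is connected, $\gamma(G_\tau)\leq k$, and $\gamma(G_\tau)\geq k$. The first two are short: for connectedness, I will observe that inserting $n+1$ into the one-line notation preserves the relative order of $\{1,\dotsc,n\}$, so the induced subgraph of $G_\tau$ on those vertices equals $G_\pi$, which is connected by hypothesis; and since $n+1>a$ sits immediately to the left of $a$ in $\tau$, the pair $\{n+1,a\}$ is an inversion, providing an edge that joins $n+1$ to $G_\pi$. For the upper bound, the set $D$ itself still dominates $G_\tau$, as domination among $\{1,\dotsc,n\}$ uses the same edges and $n+1$ has neighbor $a\in D$.

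The lower bound is the heart of the proof. Let $P=\{v\in\{1,\dotsc,n\}:\pi^{-1}(v)\geq\pi^{-1}(a)\}$, which coincides with $N_{G_\tau}(n+1)$. The key auxiliary step I would prove first is the existence of a vertex $w\in V(G_\pi)$ with $P\subseteq N_{G_\pi}[w]$. I would handle two cases. If $v\leq a$ for every $v\in P$, then each $v\in P\setminus\{a\}$ lies strictly right of $a$ in $\pi$ and is strictly less than $a$; this gives an inversion with $a$, hence an edge, so $w=a$ dominates $P$. Otherwise, let $v^*=\max\{v\in P:v>a\}$; since $a$ is the rightmost element of $D$ in $\pi$ and $v^*$ lies strictly to its right, $v^*\notin D$, so $D$-domination of $v^*$ yields some $d\in D$ adjacent to $v^*$ in $G_\pi$. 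An adjacency check rules out $d=a$ (because $v^*>a$ and $v^*$ is right of $a$), forces $d$ to lie strictly left of $a$ in $\pi$, and then forces $d>v^*$. Since $d$ then exceeds every element of $P$ and lies to the left of every element of $P$, each pair $\{d,v\}$ with $v\in P$ is an inversion, so $w=d$ works.

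With $w$ in hand, the rest is a transfer argument. Let $D'$ be any dominating set of $G_\tau$ with $|D'|=k'$. If $n+1\notin D'$, then $D'\subseteq V(G_\pi)$ dominates $G_\pi$ directly (the induced subgraph of $G_\tau$ on $\{1,\dotsc,n\}$ coincides with $G_\pi$), so $k'\geq k$. If $n+1\in D'$, set $D''=D'\setminus\{n+1\}$. Each vertex outside $P$ is dominated in $G_\tau$ by some element of $D''$ (not by $n+1$, whose neighbors all lie in $P$), and that same element dominates it in $G_\pi$. Meanwhile $w$ dominates $P$ in $G_\pi$. Therefore $D''\cup\{w\}$ is a dominating set of $G_\pi$ of size at most $k'$, giving $k\leq k'$.

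The main obstacle is the construction of $w$ in the nontrivial case. This is precisely where the hypothesis that $a$ is the \emph{rightmost} element of $D$, rather than an arbitrary element of $D$, becomes essential: it is what forces the witness $d$ to sit strictly left of $a$ in $\pi$ and to be strictly larger than $v^*$, which in turn makes $d$ a universal $G_\pi$-dominator of $P$.
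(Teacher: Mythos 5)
Your proof is correct and follows essentially the same strategy as the paper's: any dominating set of $G_\tau$ containing $n+1$ can have $n+1$ swapped for a single vertex of $G_\pi$ that dominates all of $N_{G_\tau}(n+1)$ (namely $a$ and everything to its right in $\pi$), while a dominating set avoiding $n+1$ restricts directly to $G_\pi$. The only difference is the choice of that replacement vertex --- the paper uses the largest element $b$ of $D$ uniformly, whereas you construct it by a two-case analysis ($a$ itself, or a $D$-neighbor $d$ of $v^*$); both choices rest on the same underlying observation that, since $a$ is rightmost in $D$, any vertex to the right of $a$ exceeding $\max D$ would be undominated.
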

\begin{proof}
If $G_{\pi}$ has domination number 1, then clearly $G_{\tau}$ has domination number 1 given by the same dominating set $D$. So assume $k \geq 2$ and assume that $G_{\tau}$ has domination number $r$, and let $S$ be a dominating set of $G_{\tau}$ of size $r$. If $r \leq k-1$ Then $S$ must contain $n+1$ otherwise $S$ would be a dominating set of $G_{\pi}$. Since $n+1$ does not dominate any elements to the left of $n+1$ in the one-line notation of $\tau$ (there are elements to the left of $n+1$ since $k \geq 2$), $S\setminus \{n+1\}$ dominates every element to the left of $n+1$. Now let $b \in D$ be the largest element of $D$. Since either $b=a$ or $b$ lies to the left of $a$ in the one-line notation of $\pi$, $b$ dominates $a$ and every element to the right of $a$. Therefore, $D'=(S\setminus \{n+1\}) \cup \{b\}$ is a dominating set of $G_{\pi}$. However since $|D'| \leq k-1$, this is a contradiction, showing that $r \geq k$. Since $D$ is a dominating set of $G_{\tau}$, $r=k$. The connectivity of $G_{\tau}$ follows from the connectivity of $G_{\pi}$ and Lemma \ref{dislemma1}, thus proving the claim.
\end{proof}

\begin{corollary}
For all $n$ there exists a connected permutation graph on $n$ vertices with domination number $k$ for all $1 \leq k \leq \lfloor n/2 \rfloor$.
\end{corollary}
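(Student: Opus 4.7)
The plan is to prove the corollary by induction on $n$, using Theorem \ref{cnctddomk} as the lifting mechanism that preserves the domination number while increasing the number of vertices by one. The inductive hypothesis will be: for all $1 \leq k \leq \lfloor n/2 \rfloor$, there exists a connected permutation graph on $n$ vertices with domination number $k$.

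For the base cases I would directly exhibit small examples. For $n=2,3$, the permutation $[n,1,2,\dotsc,n-1]$ gives a connected permutation graph with domination number $1$, which is the only value needed. For $n=4$, the same construction handles $k=1$, while $\pi=[2,4,1,3]$ yields the path $P_4$ with domination number $2$. For $n=5$, a single application of Theorem \ref{cnctddomk} to each of the $n=4$ examples produces connected permutation graphs on $5$ vertices with domination numbers $1$ and $2$.

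For the inductive step, assume the claim holds for some $n \geq 5$. Two cases arise according to the parity of $n+1$. If $n+1$ is odd, then $\lfloor (n+1)/2\rfloor = \lfloor n/2\rfloor$, and Theorem \ref{cnctddomk} applied to each of the graphs guaranteed by the induction hypothesis produces connected permutation graphs on $n+1$ vertices realizing every value of $k$ in the required range. If $n+1$ is even (so $n$ is odd), then $\lfloor (n+1)/2\rfloor = \lfloor n/2\rfloor + 1$; for $1 \leq k \leq \lfloor n/2\rfloor$ the same application of Theorem \ref{cnctddomk} works, and for the new top value $k=(n+1)/2$, since $n+1 \geq 6$ is even, Theorem \ref{halfdominationpermutation} (the comb construction) supplies a connected permutation graph on $n+1$ vertices with domination number exactly $(n+1)/2$.

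I do not anticipate a serious obstacle: the heavy lifting has been done in Theorems \ref{cnctddomk} and \ref{halfdominationpermutation}, and the induction merely stitches them together. The only point requiring care is aligning the base of the induction with the hypothesis $n \geq 6$ of Theorem \ref{halfdominationpermutation}, which is why the direct constructions for $n \leq 5$ are needed to start the recursion cleanly.
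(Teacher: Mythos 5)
Your proof is correct and follows essentially the same route as the paper: induction on $n$, splitting on the parity of $n+1$, lifting with Theorem \ref{cnctddomk}, and invoking Theorem \ref{halfdominationpermutation} for the new top value $k=(n+1)/2$. Your explicit base cases for $n\leq 5$ are in fact slightly more careful than the paper's own proof, which starts the induction at $n=1$ and does not address that Theorem \ref{halfdominationpermutation} is stated only for $n\geq 6$, so the small cases $n=2,4$ need exactly the direct constructions you supply.
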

\begin{proof}
We proceed by induction. Clearly the statement is true for $n=1$. Assume it is true for $n=r$. If $r+1$ is odd, then $\lfloor (r+1)/2 \rfloor = \lfloor r/2 \rfloor$, so the statement follows from Theorem \ref{cnctddomk}. If $r+1$ is even then by Theorem \ref{halfdominationpermutation} there exists a connected permutation graph with domination number $\lfloor (r+1)/2 \rfloor$. By induction and Theorem \ref{cnctddomk}, there exists a connected permutation graph on $r+1$ vertices with domination number $k$ for $1 \leq k \leq \lfloor r/2 \rfloor = \lfloor (r+1)/2 \rfloor -1$.
\end{proof}

\section{Disconnected Permutation Graphs with Domination Number $k$}

\begin{definition}
Let $c(n,k)$ denote the number of connected permutation graphs on $n$ vertices that have domination number $k$, and let $d(n,k)$ denote the number of disconnected permutation graphs on $n$ vertices that have domination number $k$.
\end{definition}

As mentioned before, each connected component of a disconnected permutation graph is a connected permutation graph of smaller order. Consequently, we count the number of disconnected permutation graphs on $n$ vertices with domination number $k$ in terms of permutation graphs of smaller order with some domination number less than $k$. First, we present a lemma that provides some information on a permutation given that its permutation graph has a connected component.

\begin{lemma}\label{dislemma}
Let $G_{\pi}$ be a permutation graph and $H \subseteq G_{\pi}$ be a subgraph with $k$ vertices whose smallest vertex is $r$, then $H$ is a connected component if and only if
\begin{enumerate}
	\item $V(H)=\{r,r+1,\dots,n+(k-1)\}$,
	\item $H$  is a connected permutation graph, and $H=G_{\tau}$, where $\tau=[\pi(r),\pi(r+1),\dots,\pi(r+(k-1))]$,
    \item $\pi^{-1}(i) < r$ for all $i<r$ and $\pi^{-1}(j)>r+(k-1)$ for all $j > r+(k-1)$,
    \item $\{\pi^{-1}(i)\ :\ i \in V(H)\}=\{r,r+1,\dots,r+(k-1)\}$.
\end{enumerate}
\end{lemma}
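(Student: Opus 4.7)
The plan is to prove the biconditional in two halves, using Lemma \ref{dislemma1} as the workhorse for the forward direction and a short inversion argument for the reverse direction. The unifying observation is that the connected components of a permutation graph correspond to a partition of $\{1,2,\dots,n\}$ into intervals of consecutive integers, with the positions of each block also forming a corresponding interval in the one-line notation.

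For the forward direction, suppose $H$ is a connected component with smallest vertex $r$. I would iterate Lemma \ref{dislemma1}: applied to $G_\pi$ itself, there exists a smallest index $j_1$ with $\{\pi(1),\dots,\pi(j_1)\}=\{1,\dots,j_1\}$, and the component of vertex $1$ is exactly $\{1,\dots,j_1\}$. Repeating this argument on the induced subgraph with vertex set $\{j_1+1,\dots,n\}$ (which is itself a permutation graph via the obvious relabeling) produces indices $0=j_0<j_1<\cdots<j_m=n$ such that the components are the induced subgraphs on $\{j_{\ell-1}+1,\dots,j_\ell\}$. Thus $r=j_{\ell-1}+1$ for some $\ell$, and the largest vertex of $H$ equals $j_\ell=r+k-1$, yielding property 1. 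Property 3 follows because vertices less than $r$ lie in earlier intervals (whose positions sit in $\{1,\dots,r-1\}$) and vertices greater than $r+k-1$ lie in later intervals (whose positions sit in $\{r+k,\dots,n\}$); property 4 then follows immediately since the $k$ positions $\{r,\dots,r+k-1\}$ must be filled by exactly the vertices of $V(H)$. Property 2 is built in: $H$ is connected by assumption, and by properties 1 and 4 the induced subgraph on $V(H)$ with the positions $\{r,\dots,r+k-1\}$ is literally the permutation graph $G_\tau$ of the subword $\tau=[\pi(r),\dots,\pi(r+k-1)]$.

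For the reverse direction, assume 1--4. Property 2 gives connectivity, so it suffices to show no vertex of $H$ is adjacent to any vertex outside $V(H)$. Let $s\in V(H)$ and $t\in V(G_\pi)\setminus V(H)$. By 1, either $t<r$ or $t>r+k-1$. In the first case, property 3 yields $\pi^{-1}(t)<r$, while property 4 yields $\pi^{-1}(s)\geq r$, so $\pi^{-1}(t)<\pi^{-1}(s)$; combined with $t<s$, the pair $\{s,t\}$ is not inverted and therefore not an edge of $G_\pi$. The case $t>r+k-1$ is symmetric. Hence $H$ is isolated from its complement and, being connected, is a connected component.

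The main obstacle is the forward direction, specifically making the iteration of Lemma \ref{dislemma1} rigorous and extracting from it the fact that both the vertex labels and the one-line positions of each component are blocks of consecutive integers. Once that structure is in hand, conditions 1, 3, and 4 are essentially just readings of the block decomposition, and condition 2 is the standard observation that an induced subgraph of a permutation graph is a permutation graph. The reverse direction is a short inversion check and should not present any real difficulty.
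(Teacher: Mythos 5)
Your proof is correct, but the forward direction takes a genuinely different route from the paper's. The paper proves condition 1 \emph{locally}: assuming $H$ is a component with smallest vertex $r$ and $k$ vertices but $V(H)\neq\{r,\dots,r+(k-1)\}$, there must be some $j$ with $r<j\leq r+(k-1)$ and $j\notin V(H)$; since $j$ lies in a different component, every vertex of $H$ below $j$ precedes $j$ in the one-line notation and every vertex of $H$ above $j$ follows it, so $j$ separates $H$ into two nonadjacent pieces, contradicting connectivity. Conditions 3 and 4 are then read off from condition 1, and condition 2 from the fact that induced subgraphs of permutation graphs are permutation graphs. You instead iterate Lemma \ref{dislemma1} to obtain the \emph{global} block decomposition $0=j_0<j_1<\cdots<j_m=n$ of the whole permutation into consecutive intervals (in both label and position), and read all four conditions off that structure. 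Both arguments are sound. The paper's is shorter and needs no induction or relabeling; yours costs a little more bookkeeping (you must verify that the restriction to $\{j_1+1,\dots,n\}$ really is a permutation graph under the obvious relabeling and that minimality of $j_1$ forces connectivity of the first block) but delivers the full component structure of $G_\pi$ in one stroke, which is essentially the remark the authors make informally right after the lemma. Your reverse direction is the same short inversion check the paper waves at with ``follows immediately,'' and is in fact more complete, since you note that property 2 is what guarantees $H$ is the \emph{induced} subgraph on $V(H)$ rather than a proper spanning subgraph of it. (One cosmetic point: the ``$n+(k-1)$'' in condition 1 of the statement is a typo for $r+(k-1)$; both you and the paper treat it as such.)
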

\begin{proof}
Let $H$ be a connected component. If $i \in V(H)$ for some $i>n+(k-1)$, then there is some $r< j \leq r+(k-1)$ such that $j \notin V(H)$. Let $V_1=\{i\in V(H)\ :\ i<j\}$, and $V_2=\{i \in V(H)\ :\ i>j\}$. Since $j \notin V(H)$, $\pi^{-1}(j)>\pi^{-1}(i)$ for all $i \in V_1$ and $\pi^{-1}(j)<\pi^{-1}(i)$ for all $i \in V_2$. However, this implies that $\pi^{-1}(r)<\pi^{-1}(s)$ for all $r \in V_1$ and $s \in V_2$, which contradicts that $H$ is connected and thus proves condition $1$.
$H$ is an induced subgraph of $G_{\pi}$, so $H$ is a connected permutation graph. Clearly there is an edge between two vertices in $H$ if and only if there is an edge between the same two vertices in $G_{\tau}$. This proves condition $2$. Notice that
$3$ follows from $1$, and $4$ follows from $3$.

The other direction of the statement follows immediately.
\end{proof}

Lemma \ref{dislemma} essentially states that the vertices of a connected component of a permutation graph $G_{\pi}$ compose a set of consecutive numbers that lie together in the one-line notation of $\pi$, and that knowledge of the smallest vertex of each component and the permutation associated to each component determines $\pi$. We utilize this concept in the following theorem.

\begin{theorem}\label{disthm}
\[
d(n,k)=\sum_{r=2}^k ~\sum_{\ell=1}^r ~\sum_{r_1+ \dots +r_{\ell}=r} ~\sum_{\substack{n_1< \cdots <n_{\ell} \\ r_1n_1+ \cdots +r_{\ell}n_{\ell}=n}} ~\sum_{\substack{k_1+ \cdots +k_{\ell}=k\\k_i \geq r_i}}\left( {r \choose r_1,r_2,\dots,r_{\ell}} \prod_{i=1}^\ell ~\sum_{k_{i_1}+ \cdots +k_{i_{r_i}}=k_i} ~\prod_{t=1}^{r_i}c(n_i,k_{i_t})\right)
\]
\end{theorem}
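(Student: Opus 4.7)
The plan is to first establish via Lemma~\ref{dislemma} that every disconnected permutation graph on $[n]$ decomposes uniquely into an \emph{ordered} tuple of connected permutation graphs whose sizes sum to $n$, and then to enumerate such tuples using the fact that the domination number of a disjoint union is the sum of the components' domination numbers.

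First I would apply Lemma~\ref{dislemma} to any $G_\pi$ counted by $d(n,k)$: ordering the connected components by their smallest vertex yields components $H_1,\dots,H_r$ (with $r\geq 2$) whose vertex sets are successive intervals in $[n]$ and whose ``internal'' permutations $\tau_1,\dots,\tau_r$ (obtained by reading $\pi$ on each block and relabeling so that the smallest element becomes $1$) are themselves connected permutation graphs. Conversely, given any ordered tuple $(\tau_1,\dots,\tau_r)$ of connected permutations with $\sum|\tau_j|=n$, concatenating and shifting recovers a unique $\pi$. Because domination in a disjoint union is additive across components, the condition $\gamma(G_\pi)=k$ becomes $\sum_j\gamma(G_{\tau_j})=k$, and each $\gamma(G_{\tau_j})\geq 1$ forces $r\leq k$, matching the outer range $r\in\{2,\dots,k\}$.

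Next I would stratify the enumeration of these ordered tuples. For fixed $r$, let $n_1<\cdots<n_\ell$ be the distinct component sizes with multiplicities $r_1,\dots,r_\ell\geq1$, so that $\sum r_i=r$ and $\sum r_i n_i=n$; let $k_i$ be the combined domination number contributed by the $r_i$ components of size $n_i$, so that $k_1+\cdots+k_\ell=k$ with $k_i\geq r_i$. The number of orderings of the size pattern into the $r$ component slots is the multinomial $\binom{r}{r_1,\dots,r_\ell}$. Within the $r_i$ slots of size $n_i$, the number of ordered assignments of connected permutation graphs with combined contribution $k_i$ is $\sum_{k_{i_1}+\cdots+k_{i_{r_i}}=k_i}\prod_{t=1}^{r_i}c(n_i,k_{i_t})$; note that the sum may be taken over all nonnegative compositions since $c(n_i,0)=0$ automatically restricts each $k_{i_t}$ to $\{1,\dots,\lfloor n_i/2\rfloor\}$. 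Multiplying these factors and summing over the admissible tuples $(\ell,r_1,\dots,r_\ell,n_1,\dots,n_\ell,k_1,\dots,k_\ell,r)$ reproduces the stated expression.

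The main obstacle is purely bookkeeping: the indices are deeply nested, and the risk is either to double-count by conflating the multinomial arrangement of size classes with the compositions of each $k_i$, or to lose the ordering inherent in the one-line notation by inserting extraneous symmetry factors for equal-sized components. The bijection from Lemma~\ref{dislemma} makes the ordering of components canonical, so no such symmetry factor is warranted; the multinomial $\binom{r}{r_1,\dots,r_\ell}$ handles the arrangement of size classes among the $r$ positions, while the inner sum over $k_{i_1}+\cdots+k_{i_{r_i}}=k_i$ handles the order in which the $r_i$ connected permutation graphs of size $n_i$ are placed into those positions. Carefully assembling these ingredients yields the formula.
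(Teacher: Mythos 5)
Your proposal is correct and follows essentially the same route as the paper's own proof: both decompose $G_\pi$ into consecutive connected components via Lemma~\ref{dislemma}, use additivity of the domination number over disjoint unions, stratify by the number of components $r$, the distinct sizes $n_i$ with multiplicities $r_i$, and the contributions $k_i$, and then count with the multinomial coefficient $\binom{r}{r_1,\dots,r_\ell}$ for placing size classes and the inner sum over ordered compositions of each $k_i$ for assigning the individual connected graphs. Your added remarks on why no symmetry factor is needed for equal-sized components and why $c(n_i,0)=0$ lets the inner sum range over nonnegative compositions are correct clarifications of points the paper leaves implicit.
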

\begin{proof}
Let $D$ be the set of disconnected permutation graphs on $n$ vertices that have the following: 
\begin{enumerate}
\item$r$ components, 
\item$\ell$ distinctly sized components whose sizes are among and include the values $n_1,n_2, \dots, n_{\ell}$, 
\item exactly $r_i$ components of size $n_i$ for $1 \leq i \leq \ell$,
\item the property that the disjoint union of the $r_i$ components of size $n_i$ have domination number $k_i \geq r_i$ for all $i$ where $k_1+k_2+ \cdots +k_{\ell}=k$. 
\end{enumerate}
Notice that $r_1n_1+r_2n_2+ \cdots + r_{\ell}n_{\ell}=n$ and that we must require $k_i \geq r_i$ since a disjoint union of $r_i$ graphs have domination number at least $r_i$. We count the number of elements in $D$. Denote each of the $r_i$ components of size $n_i$ by  $N_{i_j}$ for $1 \leq j \leq r_i$. Each $N_{i_j}$ has some domination number $k_{i_j}$, where $k_{i_1}+k_{i_2}+ \cdots + k_{i_{r_i}}=k_i$. The number of ways that the $N_{i_j}$'s can be chosen such that the disjoint union $\cup_{j=1}^{r_i}N_{i_j}$ has domination number $k_i$ is $\prod_{t=1}^{r_i}c(n_i,k_{i_t})$ (there are $c(n_i,k_{i_1})$ choices for $N_{i_1}$, $c(n_i,k_{i_2})$ choices for $N_{i_2}$, etc.). Thus, the number of ways that $r_i$ components of size $n_i$ can have domination number $k_i$ is 
$$
\sum_{ k_{i_1}+ \cdots + k_{i_{r_i}}=k_i} \left(\prod_{t=1}^{r_i}c(n_i,k_{i_t})\right)
$$ 
for each $i$. Then by Lemma \ref{dislemma}, the permutation $\pi$ such that $G_{\pi}=\bigcup_{i=1}^\ell(\cup_{j=1}^{r_i}N_{i_j})$, where the vertices of $\bigcup_{i=1}^{\ell}(\cup_{j=1}^{r_i}N_{i_j})$ are labeled by the elements from $\{1,\dots,n\}$ is determined by knowledge of the smallest vertex of each component, $N_{i_j}$. The number of ways that the smallest vertex of each $N_{i_j}$ can be determined where the components of the same size are indistinguishable is ${r \choose r_1,r_2,\dots,r_{\ell}}$. There are $\sum_{ k_{i_1}+ \cdots + k_{i_{r_i}}=k_i} \left(\prod_{t=1}^{r_i}c(n_i,k_{i_t})\right)$ ways to choose permutations whose permutation graphs are a disjoint union of $r_i$ components of size $n_i$ that have domination number $k_i$ for each $i$. Therefore, we have that 
$$
{r \choose r_1,r_2,\dots,r_{\ell}}\prod_{i=1}^{\ell} ~\sum_{k_{i_1}+ \dots +k_{i_l}=k_i} ~\prod_{t=1}^{r_i}c(n_i,k_{i_j})
$$
is the number of permutations on $[n]$ whose permutation graphs have $r_i$ components of size $n_i$, where the disjoint union of the $r_i$ components of size $n_i$ has domination number $k_i$ for all $i$. This is the size of $D$. To get the number of disconnected permutation graphs on $n$ vertices with domination number $k$, we must then take the sum of these terms over all  tuples $(k_1,\dots,k_{\ell})$ such that $k_1+ \cdots +k_{\ell}=k$ and $k_i \geq r_i$ for all $i$. Next, take the sum over all $(r_1,\dots,r_{\ell})$ and $(n_1,\dots,n_{\ell})$ such that $r_1+ \cdots +r_{\ell}=r$, $n_1<n_2< \cdots <n_{\ell}$ and $r_1n_1+ r_2n_2+ \cdots +r_{\ell}n_{\ell}=n$ (we order the $n_i$'s this way to ensure they are distinct and avoid overcounting). We then take this sum over $\ell$ from $1$ to $r$ since there is at least $1$ and at most $r$ distinct sizes of components. Ultimately, we take the sum over all $r$ from $2$ to $k$ since a disconnected permutation graph on $n$ vertices that has domination number $k$ has at least two and at most $k$ components. This gives us our result.
\end{proof}

Notice that if we take the sum staring at $r=1$ instead of $r=2$, we get a formula for $g(n,k)$, since this would include the permutation graphs on $n$ vertices with domination number $k$. This, however, would then be given in terms of connected permutation graphs on $n$ vertices with domination number number $k$. The benefit of the the formula in Theorem \ref{disthm} is that it is in terms of connected permutation graphs with less than $n$ vertices and with domination number less than $k$.

\section{Dominating Set Algorithms}

In this section, we detail an algorithm for finding a minimum dominating set of a permutation, as well as an algorithm to find a dominating set which is often minimum. We will first walk through the procedure for finding a minimum dominating set from the adjacency matrix of a permutation graph.
\begin{enumerate}
\item Compute the adjacency matrix $A$ of the permutation graph $G_{\pi}$ of a permutation $\pi \in S_n$.
\item Add the identity matrix $I_n + A = D$. We call $D$ the domination matrix. Note that each row $i$ now represents the vertices that vertex $i$ dominates in $G_{\pi}$.
\item Let $i=1$.
\item Look for a set of $i$ rows of $D$ such that when bitwise or is applied to them, the resultant bitstring contains no zeros.
\item If no such row(s) exists, increment $i$ by $1$ and go back to step $4$.

\end{enumerate}
The minimum number of rows needed for this property to hold is the domination number, and the rows selected correspond to the vertices in a minimum dominating set. It is clear that this method will produce a dominating set. A simple contradiction argument shows that this set is minimum. \newline

We will now examine an algorithm easily done by hand to find a dominating set. Notice that decreasing subsequences of a permutation (in the one-line notation) form a clique in $G_{\pi}$, where a clique is an induced subgraph that is a complete graph. We will use this fact to create an algorithm to find a dominating set.

\begin{proposition}\label{prop2}
Let $\pi$ be a permutation on $[n]$. If $\pi$ contains $1$ adjacent to $n$ in the one-line notation ($1$ is not necessarily adjacent to $n$ in $G_{\pi}$) such that $\pi (1) \neq n$ and $\pi (n) \neq 1$, then the domination number is $2$ and a minimum dominating set is $\{ 1,n \}$.
\end{proposition}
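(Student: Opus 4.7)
The plan is to prove two things: (a) $\{1,n\}$ is a dominating set of $G_\pi$ whenever $1$ and $n$ are adjacent in the one-line notation, and (b) under the extra hypotheses $\pi(1)\neq n$ and $\pi(n)\neq 1$, no singleton dominates $G_\pi$. Together these give $\gamma(G_\pi)=2$ with witness $\{1,n\}$. The argument should be almost entirely positional, reading off who is dominated by $1$ and by $n$ directly from the one-line notation, and then invoking Lemma~\ref{lem1} for the singleton step.

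For (a), I would set $k=\pi^{-1}(1)\wedge\pi^{-1}(n)$ so that $\{\pi^{-1}(1),\pi^{-1}(n)\}=\{k,k+1\}$, and split into the two subcases according to whether $n$ appears immediately before $1$ or $1$ appears immediately before $n$. In either subcase one checks that every vertex $v\in\{2,\dots,n-1\}$ either occupies a position $\le k-1$ (hence lies strictly before $1$ in the one-line notation, so $v>1$ with $\pi^{-1}(v)<\pi^{-1}(1)$, giving adjacency with $1$ in $G_\pi$) or occupies a position $\ge k+2$ (hence lies strictly after $n$, so $v<n$ with $\pi^{-1}(v)>\pi^{-1}(n)$, giving adjacency with $n$). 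Since the two position-blocks together cover all positions other than $k$ and $k+1$, every non-dominator vertex is covered. This part requires no extra hypothesis beyond adjacency of $1$ and $n$ in the one-line notation.

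For (b), I would show $\gamma(G_\pi)\ge 2$ by ruling out every singleton using Lemma~\ref{lem1}, which says $\{k\}$ dominates $G_\pi$ iff $\pi^{-1}(k)=n+1-k$, all smaller values lie to its right in the one-line notation, and all larger values lie to its left. Taking $k=1$ forces $\pi^{-1}(1)=n$, i.e.\ $\pi(n)=1$, contradicting the hypothesis $\pi(n)\neq 1$. Taking $k=n$ forces $\pi(1)=n$, contradicting $\pi(1)\neq n$. Finally, for $1<k<n$, Lemma~\ref{lem1} requires $n$ to lie to the left of $k$ and $1$ to lie to its right; thus in the one-line notation $k$ sits strictly between $n$ and $1$, which is impossible because $1$ and $n$ occupy consecutive positions. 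Combining with the size-$2$ dominating set from (a) yields $\gamma(G_\pi)=2$.

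I do not anticipate a serious obstacle: the whole argument is bookkeeping about positions in the one-line notation, and Lemma~\ref{lem1} does the heavy lifting for the lower bound. The one point to be careful about is the subcase split in (a) (the direction of the adjacency of $1$ and $n$ in $G_\pi$ differs between the two subcases, even though $\{1,n\}$ dominates in both), and to state clearly that the two hypotheses $\pi(1)\neq n$ and $\pi(n)\neq 1$ are used only to defeat the two ``boundary'' singletons $\{n\}$ and $\{1\}$, while the middle singletons are defeated purely by the adjacency of $1$ and $n$ in the one-line notation.
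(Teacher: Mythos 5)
Your proposal is correct and follows essentially the same route as the paper: everything positioned before the $\{1,n\}$ pair is adjacent to $1$, everything after it is adjacent to $n$, and the lower bound comes from the positional characterization of singleton dominating sets (Lemma~\ref{lem1}), with the hypotheses $\pi(1)\neq n$ and $\pi(n)\neq 1$ killing the two boundary singletons. Your write-up is merely more explicit than the paper's (which compresses the three singleton cases into one sentence), so there is nothing to flag.
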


\begin{proof}
This claim follows from the fact that any number $k$ to the left of $1$ must be larger than $1$ and so $1$ and $k$ are adjacent in $G_\pi$. A similar argument shows that every number $k$ to the right of $n$ in the permutation will be adjacent to $n$. Hence, every vertex in $G$ is dominated by either $1$ or $n$.\newline

Note that for a permutation graph to be dominated by a single vertex $v$, every number to the right of $v$ in the one-line notation of the permutation must be less than $v$ and every number to the left of $v$ must be greater than $v$. Using this, we can see it is impossible for any single vertex to dominate the graph of a permutation of the form described in this proposition, as no number in a permutation can be greater than or less than $n$ and $1$ simultaneously.
\end{proof}

\begin{proposition}\label{prop3}
Let $\pi$ be a permutation on $[n]$. If $|\pi (1) - \pi (n)|=1$, $\pi (1) \neq n$, and $\pi (n) \neq 1$,  then the domination number of $G_\pi$ is $2$ and a minimum dominating set of $G_\pi$ is $\{ \pi (1),\pi (n) \} $.
\end{proposition}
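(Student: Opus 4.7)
The plan is to set $a=\pi(1)$ and $b=\pi(n)$ and prove two facts separately: that $\{a,b\}$ dominates $G_\pi$, and that no singleton dominates $G_\pi$. This parallels the proof of Proposition~\ref{prop2}, the only structural difference being that here $a$ and $b$ are anchored at the two ends of the one-line notation rather than sitting in consecutive positions.

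First I would check domination. Any vertex $v\notin\{a,b\}$ occupies some position $p$ with $2\le p\le n-1$, so $a$ lies strictly to the left of $v$ and $b$ strictly to the right. Hence $v$ is adjacent to $a$ exactly when $v<a$, and adjacent to $b$ exactly when $v>b$. The hypothesis $|a-b|=1$ is doing the real work here: it guarantees no integer strictly between $a$ and $b$, so every $v\neq a,b$ satisfies either $v<\min(a,b)$ (hence is dominated by $a$) or $v>\max(a,b)$ (hence is dominated by $b$). This is the main conceptual step.

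To rule out a singleton dominating set $\{k\}$, I would invoke Lemma~\ref{lem1}, which forces every entry to the left of $k$ in the one-line notation to exceed $k$ and every entry to the right of $k$ to be less than $k$. If $k\notin\{a,b\}$, then $a$ lies strictly to the left of $k$ and $b$ strictly to the right, so the lemma demands $b<k<a$, which is impossible under $|a-b|=1$. If $k=a$, the lemma forces every other entry to be less than $a$, hence $a=n$, contradicting $\pi(1)\neq n$; the case $k=b$ is symmetric and contradicts $\pi(n)\neq 1$. Combining the two facts yields $\gamma(G_\pi)=2$ with $\{a,b\}$ minimum.

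I don't anticipate any real obstacle; the argument is short bookkeeping once Lemma~\ref{lem1} is available. The one thing worth flagging is that each of the three hypotheses $|a-b|=1$, $\pi(1)\neq n$, and $\pi(n)\neq 1$ is used in exactly one of the three singleton exclusion cases, which serves as a pleasant confirmation that the hypotheses of the proposition are tight.
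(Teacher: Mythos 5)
Your proposal is correct and follows essentially the same route as the paper: show the two endpoints cover everything because they are consecutive integers, then rule out singletons via the criterion of Lemma~\ref{lem1}. Your three-case singleton exclusion is in fact slightly more explicit than the paper's (which only spells out the interior case and leaves $k=\pi(1)$ and $k=\pi(n)$ implicit), and your observation about which hypothesis each case consumes is a nice touch.
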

\begin{proof} 
Now, we will assume that $\pi (1) > \pi (n)$, but the same argument works for $\pi (1) < \pi (n)$. Every number $k < \pi (1)$ will be adjacent to $\pi (1)$ in $G_\pi$ and every number $j > \pi (n)$ will be adjacent to $\pi (n)$ in $G_\pi$. Because $\pi (1)$ and $\pi (n)$ are consecutive integers, this construction categorizes every element of the permutation and a dominating set of $G_\pi$ is $ \{ \pi (1), \pi (n) \}$. As stated in the proof of Proposition \ref{prop2}, this set is minimum because a permutation graph is dominated by a single vertex $v$ when every number (in the one-line notation) to the right of $v$ is  less than $v$, and every number to the left of $v$ is greater than $v$. Every interior number of the permutation will either be less than or greater than both $\pi (1)$ and $\pi (n)$. Thus, $\{ \pi (1),\pi (n) \} $ forms a minimum dominating set of $G_\pi$.
\end{proof}
 
By computational evidence, the following algorithm often fails for permutations satisfying the two cases outlined by Propositions \ref{prop2} and \ref{prop3}. Excluding the aforementioned cases, this algorithm works for most permutations on $[n]$ for small $n$. As a point of reference, after excluding the two exceptional cases, this algorithm works for upwards of $96 \%$ of permutations on $[n]$ where $n \leq 8$. While this process does not guarantee a minimum dominating set, it is simple enough to easily do by hand. At the very least, this technique will obtain an upper bound of $\gamma (G_{\pi})$.

\begin{enumerate}
\item Find and list all of the decreasing subsequences of maximum length (equivalent to finding all maximum cliques).
\item For any number $k$ that is not a member of a maximum clique, find and list all maximal cliques that that contain $k$ (these cliques will not be maximum).
\item Find the most common element $c$ of all collected decreasing subsequences. If there are multiple most common elements, choose one. Add $c$ to the dominating set, and remove all listed subsequences containing $c$.
\item Repeat $3$ until the collected list of subsequences is empty.
\end{enumerate}

Once again, it is clear that this procedure will generate a dominating set, but not necessarily a minimum dominating set.

\section{A Consequence to Strong Fixed Points}

\begin{definition}
A \textit{strong fixed point} of a permutation $\pi$ on $[n]$ is an element $k$ such that $\pi^{-1}(j)<\pi^{-1}(k)$ for all $j<k$, and $\pi^{-1}(i)>\pi^{-1}(k)$ for all $i>k$.
\end{definition}
There is a bijection between the permutations on $[n]$ with exactly $k$ strong fixed points and the permutation graphs on $n$ vertices with exactly $k$ dominating sets of size one. The bijection is given by reversing a permutation. If $\pi=[a_1,a_2,\dots,a_n]$, then the reverse of $\pi$ is $\pi^r=[a_n,\dots,a_2,a_1]$. We can deduce from Lemma \ref{lem1} that $\{k\}$ is a dominating set of a permutation graph $G_\pi$ if and only if $k$ is a strong fixed point of $\pi^r$.

Thus, if we denote $St(n,k)$ as the number of permutations on $[n]$ with exactly $k$ strong fixed points ($A145878$ on the OEIS), then $St(n,k)=f(n,1,k)$, so Theorem \ref{recthm} applies to $St(n,k)$. Similarly, $g(n,1)$, whose values can be calculated as in Theorem \ref{dom1count}, is the number of permutations on $[n]$ with at least one strong fixed point ($A006932$ on the OEIS). We can use these formulas to give an inductive proof of some conjectures stated on the OEIS. A few of these conjectures are:
\begin{align*}
St(k+3,k)&=3(k+1),\\
St(k+4,k)&=(k+1)(k+28)/2,\\
St(k+5,k)&=(k+1)(3k+77).
\end{align*}
Notice that $St(k,k)=1$, since the only permutation on $[k]$ with exactly $k$ strong fixed points is $[1,2,\dots,k]$. Additionally, $St(k+1,k)=0$ since it is impossible for any graph on $k+1$ vertices to have exactly $k$ dominating sets of size one. The equality $St(k+2,k)=k+1$ can be verified by the recursive formulas of Theorem \ref{recthm} as follows:
\begin{align*}
St(2,0)=1,\\
\end{align*}
and for $k \geq 1$ and working inductively,
\begin{align*}
St(k+2,k)&=f(k+1,1,k-1)f(0,1,0)+f(k,1,k-1)f(1,1,0)+f(k-1,1,k-1)f(2,1,0)\\
&=k+1.
\end{align*}
We can prove the above conjectures in a similar fashion.
\begin{corollary}\label{cor1}
$$St(k+3,k)=3(k+1)$$
\end{corollary}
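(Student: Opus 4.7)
The plan is to mirror the template the excerpt uses for $St(k+2,k)=k+1$: apply Theorem~\ref{recthm} with $n=k+3$ and $t=k$, identify which terms in the resulting expansion vanish, and induct on $k$.

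First, I would record the seed values $f(j-1,1,0)$ for $j=1,2,3,4$. Via $f(n,1,0)=n!-g(n,1)$ and Theorem~\ref{dom1count}, these come out to $1,\,0,\,1,\,3$. The zero at $j=2$ automatically removes one of the four terms in the expansion
\[
St(k+3,k)=f(k+3,1,k)=\sum_{j=1}^{4}f(k+3-j,1,k-1)\,f(j-1,1,0).
\]

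Next, I would inspect the three remaining recursive factors $St(k+3-j,k-1)$. The $j=3$ factor is $St(k,k-1)=0$: if a permutation of $[k]$ had $k-1$ strong fixed points, those $k-1$ elements would occupy their natural positions, forcing the remaining element into its own natural position as well, which in turn makes all $k$ elements strong fixed points, a contradiction. The $j=4$ factor is $St(k-1,k-1)=1$, realized only by the identity. The $j=1$ factor is $St(k+2,k-1)$, which is precisely the statement of the corollary at the preceding index. Collecting the nonvanishing contributions, the recursion collapses to
\[
St(k+3,k)=St(k+2,k-1)+3.
\]

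A short induction on $k$ then finishes the proof. The base case $k=0$ reads $St(3,0)=f(3,1,0)=3=3(0+1)$, which is immediate. For the inductive step, the hypothesis $St(k+2,k-1)=3k$ substituted into the displayed identity yields $St(k+3,k)=3k+3=3(k+1)$. The only real obstacle is bookkeeping: computing the four seed values correctly and verifying that two of the four terms in the expansion vanish. Once that is done, the recursion produces the desired ``$+3$'' increment and the induction closes with no effort.
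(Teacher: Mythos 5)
Your proposal is correct and follows essentially the same route as the paper: expand $St(k+3,k)=f(k+3,1,k)$ via Theorem~\ref{recthm} into four terms, use the seed values $f(0,1,0)=1$, $f(1,1,0)=0$, $f(2,1,0)=1$, $f(3,1,0)=3$ together with $St(k,k-1)=0$ and $St(k-1,k-1)=1$ to collapse the sum, and induct from the base case $St(3,0)=3$. The paper's proof is just a terser version of this same computation, so your write-up simply supplies the bookkeeping it leaves implicit.
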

\begin{proof}
It is easily verifiable that $St(3,0)=3$ (notice that this value is also given by $3!-g(3,1)$). Now, working inductively, we have for $k \geq 1$:
\begin{align*}
St(k+3,k)&=f(k+2,1,k-1)f(0,1,0)+f(k+1,1,k-1)f(1,1,0)+f(k,1,k-1)f(2,1,0)+f(k-1)f(3,1,0)\\
&=3k+3=3(k+1).
\end{align*}
\end{proof}
The proofs of the following two corollaries are comparable to that of Corollary \ref{cor1}.
\begin{corollary}
For any natural number $k$, $St(k+4,k)=(k+1)(k+28)/2$
\end{corollary}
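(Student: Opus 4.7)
The plan is to prove this corollary by induction on $k$, following exactly the template of Corollary~\ref{cor1}. The key tool is Theorem~\ref{recthm} specialized to $St(k+4,k)=f(k+4,1,k)$, which expresses the target as a sum of $(k+4)-k+1=5$ terms, so the work reduces to evaluating five factors.

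For the base case $k=0$, I would verify that $St(4,0)=f(4,1,0)=4!-g(4,1)$ equals $14$. This requires computing $g(4,1)$ from Theorem~\ref{dom1count} using the small-index table $f(0,1,0)=1$, $f(1,1,0)=0$, $f(2,1,0)=1$, $f(3,1,0)=3$, yielding $g(4,1)=10$, so $St(4,0)=14=(0+1)(0+28)/2$. I would also record $f(4,1,0)=14$, since this value reappears as a coefficient in the inductive step.

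For the inductive step I take $k\geq 1$ and assume $St(k+3,k-1)=k(k+27)/2$, the statement at parameter $k-1$. Theorem~\ref{recthm} then gives
\[
St(k+4,k)=\sum_{j=1}^{5} f(k+4-j,1,k-1)\, f(j-1,1,0),
\]
with the coefficient sequence $(f(0,1,0),f(1,1,0),f(2,1,0),f(3,1,0),f(4,1,0))=(1,0,1,3,14)$. The five factors $f(k+4-j,1,k-1)$ are identified in turn as: the inductive hypothesis ($j=1$); Corollary~\ref{cor1} applied with parameter $k-1$ ($j=2$, but killed by the coefficient $0$); the identity $St(m+2,m)=m+1$ at $m=k-1$ ($j=3$); the trivial value $St(k,k-1)=0$ ($j=4$); and $St(k-1,k-1)=1$ ($j=5$). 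Substituting and simplifying produces $\tfrac{k(k+27)}{2}+k+14=\tfrac{(k+1)(k+28)}{2}$, closing the induction.

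The only step requiring any real care is confirming the auxiliary identities $St(k,k-1)=0$ and $St(k-1,k-1)=1$; both follow quickly from the definition of a strong fixed point, since the identity is the unique permutation in which every element is a strong fixed point, and requiring $k-1$ of $k$ elements to be strong fixed points forces the remaining element to be fixed as well. Beyond that, the whole argument is an algebraic simplification exactly parallel to the proof of Corollary~\ref{cor1}, and I anticipate no substantive obstacle.
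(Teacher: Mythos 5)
Your proof is correct and is precisely the argument the paper intends: the paper omits this proof, stating only that it is ``comparable to that of Corollary \ref{cor1},'' and your induction via Theorem \ref{recthm} with coefficient sequence $(f(0,1,0),\dots,f(4,1,0))=(1,0,1,3,14)$ is exactly that template. All of the auxiliary values check out ($g(4,1)=10$, hence $f(4,1,0)=St(4,0)=14$, matching the base case), and the final simplification $k(k+27)/2+k+14=(k+1)(k+28)/2$ is correct.
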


\begin{corollary}
For any natural number $k$, $St(k+5,k)=(k+1)(3k+77)$
\end{corollary}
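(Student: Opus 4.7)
The plan is to mirror the inductive argument of Corollary \ref{cor1}, invoking Theorem \ref{recthm} with $n = k+5$ and $t = k$. Expanding the recursion,
\[
St(k+5,k) \;=\; f(k+5,1,k) \;=\; \sum_{j=1}^{6} f(k+5-j,\,1,\,k-1)\,f(j-1,\,1,\,0),
\]
so there are exactly six summands. Each first factor has the shape $St((k-1)+s, k-1)$ for $s \in \{0,1,\dots,5\}$, and all six such values are already at our disposal: $St(m,m)=1$, $St(m+1,m)=0$, $St(m+2,m)=m+1$, $St(m+3,m)=3(m+1)$, $St(m+4,m)=(m+1)(m+28)/2$ (from the immediately preceding corollary), and $St(m+5,m)=(m+1)(3m+77)$ is provided by the inductive hypothesis for $m=k-1$.

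For the base case $k=0$, I will verify $St(5,0)=77$ by computing $g(5,1)$ directly from Theorem \ref{dom1count}, using the values $f(0,1,0),\dots,f(4,1,0)=1,0,1,3,14$ (the last from the preceding corollary at $k=0$) to obtain $g(5,1)=4!\cdot 1+3!\cdot 0+2!\cdot 1+1!\cdot 3+0!\cdot 14=43$ and hence $St(5,0)=f(5,1,0)=120-43=77$, in agreement with $(0+1)(3\cdot 0+77)$. For the inductive step with $k\geq 1$, I substitute into the six-term sum. The coefficients $f(j-1,1,0)$ for $j=1,\dots,6$ are $1,0,1,3,14,77$; the $j=2$ term vanishes since $f(1,1,0)=0$, and the $j=5$ term vanishes since $f(k,1,k-1)=St(k,k-1)=0$. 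The remaining four contributions collapse to
\[
k(3k+74)\cdot 1 \;+\; 3k\cdot 1 \;+\; k\cdot 3 \;+\; 1\cdot 77 \;=\; 3k^2+80k+77 \;=\; (k+1)(3k+77),
\]
which closes the induction.

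The only real obstacle is bookkeeping: correctly matching each of the six first factors $St((k-1)+s,k-1)$ to the appropriate prior identity, and verifying the six coefficients $f(j-1,1,0)$ — in particular, the value $f(5,1,0)=77$ that appears in the $j=6$ term of the inductive step is exactly the base case assertion, so one must make sure the base case is established by direct computation before invoking the recursion. Beyond this index tracking, no new combinatorial insight is required past Theorem \ref{recthm} and the two preceding corollaries.
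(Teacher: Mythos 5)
Your proposal is correct and follows exactly the route the paper intends: the paper omits the details, stating only that the proof is ``comparable to that of Corollary \ref{cor1},'' and your six-term expansion of Theorem \ref{recthm}, the base-case computation $St(5,0)=120-43=77$, and the inductive collapse to $3k^2+80k+77=(k+1)(3k+77)$ all check out. No gaps.
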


We now give a recursive method of finding closed expressions for the values $St(k+r,k)$ when $r$ is fixed and $k$ is allowed to vary. More specifically, we show that $St(k+r,k)$ is given by a polynomial expression for a fixed $r$, and that if the polynomial expressions for $St(k+s,k)$ for $s<k$ are known, then the polynomial expression for $St(k+r,k)$ can be determined. 
\begin{theorem}\label{Stthm}
The value $St(k+r,k)$ for fixed $r$ is given by a polynomial expression $p(k)$. Furthermore, for $k \geq 1$, if $R(k)=\sum_{i=2}^{r+1}St((k+r)-i,k-1)St(i-1,0)=b_{n-1}k^{n-1}+ \cdots + b_1k+b_0$, then $p(k)$ is an $n^{\rm{th}}$ degree polynomial and the coefficients of $p$ are given by
\[
a_n=\frac{b_{n-1}}{n},
\]
and for $1 \leq j \leq n-1$,
\[
a_{n-j}=\frac{b_{n-j-1}-\sum\limits_{i=0}^{j-1}(-1)^{j-i} \binom{n-i}{j+1-i}a_{n-i}}{{n-j \choose 1}},
\]
and $a_0=St(r,0)$.
\end{theorem}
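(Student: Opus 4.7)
The plan is to combine Theorem \ref{recthm} with a strong induction on $r$ and the method of finite differences. First, setting $p_r(k)=St(k+r,k)=f(k+r,1,k)$ and applying Theorem \ref{recthm} with $n=k+r$ and $t=k$, the upper limit of the inner sum becomes $r+1$, and the $\ell=1$ term equals $St((k-1)+r,k-1)\cdot f(0,1,0)=p_r(k-1)$. Peeling this term off rewrites the recursion as the first-order linear difference equation
\[
p_r(k)-p_r(k-1)=R(k)\quad\text{for }k\geq1,
\]
with initial condition $p_r(0)=St(r,0)$.

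Next, I would prove by strong induction on $r$ that $p_r$ is a polynomial in $k$. The base case is $p_0(k)=St(k,k)=1$, the identity permutation being the unique permutation on $[k]$ with $k$ strong fixed points. For the inductive step, every summand of $R(k)$ has the form $St(\ell-1,0)\cdot p_{r-\ell+1}(k-1)$ with $r-\ell+1<r$, so by the inductive hypothesis each such factor, and hence $R(k)$ itself, is a polynomial in $k$. Writing $\deg R=n-1$, the antidifference $p_r$ has degree exactly $n$ and is uniquely determined by its value at $k=0$.

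To extract the coefficients I would expand
\[
p_r(k)-p_r(k-1)=\sum_{m=1}^n a_m\bigl(k^m-(k-1)^m\bigr)=\sum_{j=0}^{n-1}k^j\sum_{m=j+1}^n(-1)^{m-j-1}\binom{m}{j}a_m
\]
and match the coefficient of $k^j$ with $b_j$. This yields the triangular linear system
\[
b_j=\sum_{m=j+1}^n(-1)^{m-j-1}\binom{m}{j}a_m,\qquad 0\leq j\leq n-1.
\]
The top equation ($j=n-1$) gives $a_n=b_{n-1}/n$ immediately. For the remaining equations, I would reindex via $j=n-j'-1$ and $m=n-i$, and invoke $\binom{n-i}{n-j'-1}=\binom{n-i}{j'+1-i}$ to convert the relation into the stated recursive formula for $a_{n-j}$ in terms of $b_{n-j-1}$ and the previously determined coefficients $a_{n-i}$ with $0\leq i\leq j-1$. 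Finally, $a_0$ is pinned down by evaluating $p_r$ at $k=0$, giving $a_0=St(r,0)$.

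The main obstacle is bookkeeping rather than conceptual: one must verify carefully that the $\ell=1$ term of the recursion genuinely reproduces $p_r(k-1)$ (so that the recurrence is an honest first-order difference equation in $k$ whose forcing term is supported on strictly smaller $r$), and one must keep the indices of the triangular system straight so that the formula for $a_{n-j}$ involves only coefficients that have already been computed. No nontrivial combinatorial or analytic difficulty remains beyond this.
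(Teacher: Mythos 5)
Your proposal is correct and follows essentially the same route as the paper: both apply Theorem \ref{recthm} with $n=k+r$, $t=k$ to peel off the $i=1$ term and obtain the first-order difference equation $p(k)-p(k-1)=R(k)$, both use induction on $r$ to see that $R(k)$ is a polynomial, and both match coefficients via the binomial expansion of $(k-1)^m$ to arrive at the triangular system $b_{n-j-1}=\sum_{i=0}^{j}(-1)^{j-i}\binom{n-i}{j+1-i}a_{n-i}$. The only difference is presentational: the paper posits the stated coefficient formulas and verifies $p(k)-p(k-1)=R(k)$, whereas you derive the formulas by solving the triangular system, which amounts to the same computation.
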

\begin{proof}
We prove this theorem by induction. Notice from what we have shown above that $St(k+t,k)$ is given by a polynomial expression for $0 \leq t \leq5$. Assume that $St(k+s,k)$ is given by a polynomial expression in $k$ for all $s<r$. Let $p(k)=a_nk^n+ \cdots +a_1k+a_0$, where the $a_i$'s are given as above. Notice that $St(r,0)=p(0)=a_0$. Assume that $St((k-1)+r,k-1)=p(k-1)$. Notice that for all $1 \leq j \leq n-1$, $\sum_{i=0}^{j}(-1)^{j-i} {n-i \choose j+1-i}a_{n-i}=b_{n-j-1}$. By Theorem \ref{recthm},
\begin{align*}
St(k+r,k)&=\sum_{i=1}^{r+1}St((k+r)-i,k-1)St(i-1,0)\\
&=p(k-1)+\sum_{i=2}^{r+1}St((k+r)-i,k-1)St(i-1,0)\\
&=p(k-1)+R(k)\\
&=a_n(k-1)^n+ \cdots +a_1(k-1)+a_0+R(k)\\
&=p(k)+\sum_{i=0}^{n-1}a_{n-i}\sum_{j=1}^{n-i}{n-i \choose j}(-1)^{j}k^{(n-i)-j}+R(k),
\end{align*}
where the last equality is obtained by using the binomial expansion of each term of the form $(k-1)^c$. Finally, by collecting terms with like powers of $k$ in the last expression, we have
\begin{align*}
St(k+r,k)&=p(k)+\sum_{j=0}^{n-1}k^{n-(j+1)}\sum_{i=0}^j(-1)^{j+1-i}{n-i \choose j+1-i}a_{n-i}+R(k)\\
&=p(k)+\sum_{j=0}^{n-1}(-b_{n-j-1})k^{n-j-1}+R(k)\\
&=p(k)-R(k)+R(k)\\
&=p(k),
\end{align*}
completing the inductive step.
\end{proof}
Recall that $St(m,0)=f(m,1,0)=m!-g(m,1)$, which can be calculated recursively by Theorem \ref{dom1count}. Furthermore, all of the values and expressions in Theorem \ref{Stthm} can be calculated recursively.
\section{Conclusion} 

Here we found a recursive formula for $g(n,1)$. We would like to obtain similar results for $g(n,k)$ for different values of $k$, however, this seems to be significantly more difficult than the case $k=1$. 

\begin{problem}
Let $k$ be a positive integer greater than $1$.  Does there exist a recursive formula for $g(n,k)$ for every $k$?
\end{problem} 

\begin{problem}
Let $k$ be a positive integer greater than $1$.  Does there exist a recursive formula for $g(n,k)$ for a fixed $k$?
\end{problem}

In a similar fashion, this study examined $f(n,k,t)$.  A natrual question is to study $f(n,k,t)$ for different values of $k$.

\begin{problem}
Let $k$ be a positive integer.  Can we find $f(n,k,t)$ for a fixed $t$?
\end{problem}

It is also of interest to find the number of permutation graphs with domination number $2$ that are dominated by a particular pair of vertices.

\begin{problem}
Can one count the number of permutation graphs with domination number $2$ that are dominated by the set $\{u,v\}$ for a fixed $u,v \in V(G)$?
\end{problem}

\section{Acknowledgements}

These results are based upon work supported by the National Science Foundation under the grant number DMS-1560019.  We would also like to send our gratitude to Eugene Fiorini and Byungchul Cha for making all of this possible.  


\end{document}